\definecolor{cqcqcq}{rgb}{0.7529411764705882,0.7529411764705882,0.7529411764705882}
\definecolor{qqzzff}{rgb}{0.,0.6,1.}
\definecolor{wwccqq}{rgb}{0.4,0.8,0.}
\definecolor{ffqqqq}{rgb}{1.,0.,0.}
\newcommand{\A}{\mathcal{A}}
\newcommand{\B}{\mathcal{B}}
\newcommand{\Ms}{\mathcal{M}}
\newcommand{\cK}{\mathcal{K}}
\newcommand{\CP}{\mathbb{C}P}
\newcommand{\zz}{\mathbb{Z}}
\newcommand{\cc}{\mathbb{C}}
\newcommand{\nn}{\mathbb{N}}
\newcommand{\Cs}{C$^*$-}
\DeclareMathOperator{\id}{id}
\newtheorem{thm}{Theorem}[section]
\newtheorem{lem}[thm]{Lemma}
\newtheorem{prop}[thm]{Proposition}
\theoremstyle{definition}
\newtheorem{defn}{Definition}[section]
\newtheorem{rem}{Remark}[section]
\title{Split extensions and $KK$-equivalences for quantum projective spaces}
\author{Francesca Arici}
\address{Mathematical Institute, Leiden University, \\
P.O. Box 9512, 2300 RA
Leiden, \\
The Netherlands.}
\email{f.arici@math.leidenuniv.nl}
\author{Sophie Emma Zegers}
\address{Faculty of Mathematics and Physics, Charles University, Sokolovská 49/83, 186 75 Praha 8, Czech Republic}
\email{zegers@karlin.mff.cuni.cz, sophieemmazegers@gmail.com} 
\date{\today}
\subjclass[2020]{19K35, 46L85, 46L65, 58B34}
\keywords{Quantum projective space, $KK$-theory, $KK$-equivalence, Graph algebras}
\begin{document}

\maketitle

\begin{abstract}
 We study the noncommutative topology of the $C^*$-algebras $C(\CP^{n}_q)$ of the quantum projective spaces within the framework of Kasparov’s bivariant $K$-theory. In particular, we construct an explicit $KK$-equivalence with the commutative algebra $\mathbb{C}^{n+1}$.  
  Our construction relies on showing that the extension of $C^*$-algebras relating two quantum projective spaces of successive dimensions admits a splitting, which we can describe explicitly using graph algebra techniques.
\end{abstract}

\section{Introduction}

Gelfand duality,  which lies at the base of noncommutative geometry, establishes an equivalence of categories between commutative $C^*$-algebras and locally compact Hausdorff spaces.  For this reason, when studying general noncommutative $C^*$-algebras,  even though there is no longer an underlying space, one often thinks of them as algebras of continuous functions on a non-existing virtual space.  

This approach is particularly effective when working with so-called \emph{quantum deformations} of spaces: many classical topological spaces have a $q$-deformed analogs, obtained from quantum groups and their homogeneous spaces. 

The $C^*$-algebra of the quantum $(2n+1)$-sphere by Vaksman and Soibelman \cite{VS91}, denoted $C(S_q^{2n+1})$, is perhaps one of the most studied noncommutative spaces within this class. It is constructed as a quantum homogeneous space for the special unitary group, and can also be proven to be isomorphic to a universal $C^*$-algebra in $(n+1)$ generators subject to a set of commutation relations.  In those relations, a parameter $q\in (0,1)$ plays a central role, making the resulting $C^*$-algebra noncommutative. When writing $C(S_q^{2n+1})$ for the \Cs algebra of the quantum sphere, one often thinks of $S_q^{2n+1}$ as a virtual space. In the limit $q=1$, the resulting \Cs algebra $C(S_1^{2n+1})$ is commutative and isomorphic to the $C^*$-algebra $C(S^{2n+1})$ of continuous functions on the $(2n+1)$-sphere.  

Like their classical counterparts, the odd quantum spheres are endowed with a canonical $U(1)$-action,  allowing one to define,  in total analogy with the commutative setting, the quantum complex projective space $C(\CP_q^n)$ as the fixed point algebra for that action. 
In \cite{HS02}, Hong and Szyma\'nski showed that both $C(S^{2n+1}_q)$ and $C(\CP_q^{n})$ are graph $C^*$-algebras. Through the graph-algebraic picture, one can obtain useful information about the structure of those $C^*$-algebra, including topological invariants, by only considering properties of the underlying graph. In particular, using graph $C^*$-algebra techniques, the $K$-theory groups of  $C(\CP_q^{n})$ can be found to agree with the ones of their classical counterparts:
\[K_0(C(\CP_q^n))\cong \mathbb{Z}^{n+1}, \qquad  K_1(C(\CP_q^n))\cong \{0\}.\]

In \cite{HS02}, Hong and Szyma\'nski further show that for $n\geq 1$, the algebras of two projective spaces of successive dimension fit into an extension of \Cs algebras of the from
\begin{equation}\label{exactseqn}
\xymatrix{0 \ar[r] & \cK \ar[r] & C(\CP^n_q) \ar[r] & C(\CP^{n-1}_q) \ar[r] & 0},
\end{equation}
with the convention that $C(\CP^{n-1}_q) \simeq \mathbb{C}$. It is worth stressing that the exact sequence for $n=1$,
\begin{equation}\label{exactseq1}
\xymatrix{0 \ar[r] & \cK \ar[r] & C(\CP^1_q) \ar[r] & \mathbb{C} \ar[r] & 0},
\end{equation} 
is known to split, which implies that the algebra $C(\CP^1_q)$ is isomorphic to the minimal unitization of the compacts. Note also that one can also compute the $K$-theory groups of quantum projective spaces inductively, using the above exact sequence \eqref{exactseqn}.

In the present work, we bring the analysis of the topological invariants of quantum projective spaces further and study these algebras within the framework of Kasparov's bivariant K-theory \cite{Kas80}.  In particular, we construct an \emph{explicit} $KK$-equivalence between the algebras $C(\CP_q^{n})$ and $\mathbb{C}^{n+1}$. 

The fact that the two algebras are $KK$-equivalent follows from the work \cite{NesTus12}, where Neshveyev and Tuset study quantum homogeneous space $G_q/K_q$. Those are $q$-deformations of the homogenous space $G/K$ for $G$ a compact simply connect semisimple Lie group with arbitrary closed Poisson-Lie subgroup, with deformation parameter $q \in (0,1]$. In particular, Neshveyev and Tuset prove that $q \mapsto C(G_q/K_q)$ is a continuous field of \Cs-algebras, and that all the \Cs algebras in the fiber are canonically $KK$-equivalent, and hence $KK$-equivalent to the commutative algebra $C(G_1/K_1)\cong C(G/K)$. The desired $KK$-equivalence for $C(\CP^n_q)$ follows by considering $G_q=SU_q(n+1)$ and $K_q=U_q(n)$.

Another way to deduce $KK$-equivalence is by looking at the $K$-theory groups of the \Cs algebra $C(\CP^n_q)$ and of its commutative counterparts. Since those are isomorphic, such a $KK$-equivalence follows provided $C(\CP_q^{n})$ is contained in the class of \Cs algebras that satisfy the Universal Coefficient Theorem (UCT) of Rosenberg and Schochet. Indeed, by \cite[Corollary~7.5]{RoSch87} (compare, \cite[Corollary 23.10.2]{Bl98}), two $C^*$-algebras in the UCT class are $KK$-equivalent if and only if they have isomorphic K-theory groups. 

To see that the algebra $C(\CP_q^n)$ is in the UCT class, one observes that the UCT class is closed under extensions and contains the algebra of compact operators and the complex numbers. Hence, by the family of extensions \eqref{exactseqn} and induction on $n$, it follows that $C(\CP_q^n)$ is in the UCT class for all $n$. 

Our strategy for obtaining an explicit $KK$-equivalence between $C(\CP_q^{n})$ and $\mathbb{C}^{n+1}$ consists of proving that \eqref{exactseqn} splits.  For any split exact sequence one obtains a $KK$-class implementing the desired $KK$-equivalence through the so-called splitting homomorphism (see, for instance, \cite[Exercise 19.9.1]{Bl98}).  

The existence of a splitting is, once more, a direct consequence of the Universal Coefficient Theorem \cite{RoSch87}. Knowing that a splitting exists is however not enough for practical applications, and, in general, constructing such a splitting explicitly is a non-trivial task. When considering $C(\CP_q^n)$ as a graph $C^*$-algebra, the structure of the graph makes it easier to unravel the form of such a splitting. To our knowledge, such a splitting has not been described in the literature before.

The structure of the paper is as follows: In Section~\ref{sec:graph} we recall definitions and results on graph \Cs algebras, focusing on K-theory and ideal structure, which we then specialize to quantum projective spaces in  Section~\ref{sec:QPS}. Section~\ref{SplittingProjectivespace} contains the construction of a splitting for the extension \eqref{exactseqn}. We then recall how one obtains explicit $KK$-equivalences from split extensions in Section~\ref{sec:KKequiv}, and then proceed to the proof of our $KK$-equivalence result in Section~\ref{explicitKKequiv}. Finally, in  Section~\ref{ProjectionsFredholmModules} we relate the classes in $KK(\cc, C(\CP_q^n))$ obtained from the splitting to classes of projections in $C(\CP_q^n)$ which generate $K_0(C(\CP_q^n))$. 
\subsection*{Acknowledgements}
We are indebted to our colleagues Bram Mesland and Wojciech Szyma\'nski for inspiring conversations on $KK$-theory and on graph $C^*$-algebras. We would also like to thank Francesco D'Andrea,  Piotr Hajac, Tomasz Maszczyk, and Sergey Neshveyev for valuable comments on an earlier version of this work. We also thank the anonymous referee for comments and suggestions for improvement. This work is part of the research program VENI with project number 016.192.237, which is (partly) financed by the Dutch Research Council (NWO). The second author was supported by the DFF-Research Project 2 on `Automorphisms and invariants of operator algebras', Nr. 7014--00145B and by the Carlsberg Foundation through an Internationalisation Fellowship.
  
  \section{Preliminaries on graph algebras}
  \label{sec:graph}
\subsection{Graph \texorpdfstring{$C^*$}{C*}-algebras}
We start out by recalling the definition of the $C^*$-algebra associated to a directed graph \cite{FLR00}, together with results about its K-theory and ideal structure.

A directed graph $E=(E^0,E^1,r,s)$ consists of a countable set $E^0$ of \textit{vertices}, a countable set $E^1$ of \textit{edges} and two maps $r,s: E^1\to E^0$ called the \textit{range map} and the \textit{source map} respectively.  For an edge $e\in E^1$ from $v$ to $w$ we have $s(e)=v$ and $r(e)=w$. 
A \textit{path} $\alpha$ in a graph is a finite sequence $\alpha=e_1e_2\cdots e_n$ of edges satisfying $r(e_i)=s(e_{i+1})$ for $i=1,...,n-1$. We denote by $E^*$ all paths of finite length in the graph $E$. 

A vertex $v\in E^0$ is called \textit{regular} if the set 
$s^{-1}(v):=\{e\in E^1| \ s(e)=v\}$
is finite and non-empty.
A vertex $v$ is called a \textit{sink} if it emits no edges i.e. $s^{-1}(v)$ is empty. A graph $E$ is \textit{row-finite} if every vertex in $E^0$ is either regular or a sink.

\begin{defn} Let $E=(E^0,E^1,r,s)$ be a directed graph. The graph $C^*$-algebra $C^*(E)$
is the universal $C^*$-algebra generated by families of projections $\{P_v | \ v\in E^0\}$ and partial isometries $\{S_e | \ e\in E^1\}$ satisfying, for all $v, w \in E^0$ and $e, f \in E^1$, the relations
\begin{itemize}
\item[(i)] $P_vP_w=0$, for $v\neq w$;
\item[(ii)] $S_e^*S_f=0$, for $e\neq f$;
\item[(iii)] $S_e^*S_e=P_{r(e)}$;
\item[(iv)] $S_eS_e^*\leq P_{s(e)}$;
\item[(v)] $P_v=\underset{s(e)=v}{\sum}S_e S_e^*$, for every $v \in E^0$ regular.
\end{itemize}
Conditions (iii)-(v) are known as the \textit{Cuntz--Krieger relations}. 
Note that relation (iv) is equivalent to $P_{s(e)}S_eS_e^*=S_eS_e^*$, see \cite[Chapter 5]{R05}.
\end{defn}
By universality, we can define a circle action $\gamma: \mathbb{T}\to \text{Aut}(C^*(E))$, called the \textit{gauge action}, for which 
$$\gamma_z(P_v)=P_v \ \text{and} \ \gamma_z(S_e)=zS_e$$ 
for all $v\in E^0, e\in E^1$ and $z\in \mathbb{T}$. 

\subsubsection{$K$-theory} The $K$-theory of graph $C^*$-algebras has over the time been been described under various assumptions on the graph: first for Cuntz--Krieger algebras in \cite{PR96}, then in the case of row-finite graphs \cite{RS04}. We present here the description of the $K$-theory groups for a general graph $E$ from \cite{DT02}.

Let $V_E \subseteq E^0$ denote the collection of all the regular vertices. Let $\mathbb{Z} V_E$ and $\mathbb{Z} E^0$ be the free abelian groups on free generators $V_E$ and $E^0$, respectively. We define a map $K_E: \mathbb{Z} V_E\to \mathbb{Z} E^0$ as follows
\begin{equation}
    \label{eq:kercoker}
K_E(v)=\left(\sum_{e\in E^1: \ s(e)=v} r(e)\right)-v.
\end{equation}
Then \cite[Theorem 3.1]{DT02} (see also  \cite[Proposition 2]{S02})  yields
\begin{equation}
    \label{eq:mapKth}
K_0(C^*(E))\cong \text{coker}(K_E), \qquad \ K_1(C^*(E))\cong \ker(K_E). 
\end{equation}
If $E$ is a row finite graph with no sinks, the above corresponds to taking the cokernel and the kernel of $A_E^T-1$, where $A_E$ is the adjacency matrix of the graph, see \cite[Theorem 3.2]{RS04}. 

\subsubsection{Gauge-invariant ideals}
\label{sub:gauge}
The ideal structure of a graph \Cs algebra can also be read off from the underlying graph. 
We shall now describe the gauge-invariant ideals of $C^*(E)$ which arise from hereditary and saturated subsets. A subset $H\subseteq E^0$ is called \textit{hereditary} if the following condition is satisfied: If $v\in H$ and $w\in E^0$ is such that there exists a path from $v$ to $w$ then $w\in H$. A subset $S\subseteq E^0$ is called \textit{saturated} if the following condition is satisfied: If $w$ is a regular vertex in $E^0$ and for each $e\in E^1$, for which $s(e)=w$, we have $r(e)\in S$, then $w\in S$. In other words, if all the outgoing edges from $w$ end inside $S$ then $w$ is also in $S$.

It was shown in \cite{BPRS00} that gauge-invariant ideals of the \Cs algebra correspond to hereditary and saturated subsets of the vertex set.
Let $\Sigma_E$ denote the collection of all hereditary and saturated subset $H\subseteq E^0$. For each $H\in\Sigma_E$ we obtain a gauge-invariant ideal: the ideal generated by $\{P_v| \ v\in H\}$. We denote this ideal by $I_H$. 
Given a row-finite graph $E$, \cite[Theorem 4.1]{BPRS00} establishes a one-to-one correspondence between $\Sigma_E$ and the gauge-invariant ideals of $C^*(E)$. The correspondence is given by the following maps: 
\[
H\longmapsto I_H, \qquad J\longmapsto \{v\in E^0| \ P_v\in J\},
\]
for $H\in \Sigma_E$ and $J$ a gauge-invariant ideal. A word of caution is needed here: if some of the vertices in the graph emit infinitely many edges, not every gauge-invariant ideal need to take the form $I_H$ for a $H\in\Sigma_E$. This phenomenon is thoroughly described in \cite[Theorem 3.6]{BHRS02}, where the authors also provide a complete description of all gauge-invariant ideals of an infinite graph. 

Moreover, if $E$ is row-finite and $H\in\Sigma_E$, then the quotient \Cs algebra $C^*(E)/I_H$ is a graph algebra, isomorphic to $C^*(F)$,  where $F$ is the directed graph defined by setting
\[ F^0=E^0\setminus H, \qquad F^1:=\{e\in E^1| \ r(e)\notin H\},\] and with range and source maps obtained from the ones from the graph $E$ \cite[Theorem 4.1]{BPRS00}. 

For graph  $C^*$-algebras that are not row-finite, like the quantum complex projective spaces, one needs the more advanced description of the quotient $C^*(E)/I_H$ as a graph $C^*$-algebra from \cite{BHRS02}. Let $H\in\Sigma_E$ and define
$$
H_{\infty}^{\text{fin}}:=\left\lbrace v\in E^0\setminus H \ : \ |s^{-1}(v)|=\infty \ \text{and} \ 0<|s^{-1}(v)\cap r^{-1}(E^0\setminus H)|<\infty\right\rbrace.
$$
Let $E/H$ be the directed graph for which 
$$
\begin{aligned}
&(E/H)^0=(E^0\setminus H)\cup \{\beta(v)| \ v\in H_{\infty}^{\text{fin}}\}, \\
&(E/H)^1=r^{-1}(E^0\setminus H)\cup \{\beta(e)| \ e\in E^1, r(e)\in H_{\infty}^{\text{fin}}\}, 
\end{aligned}
$$
where the symbols $\beta(v)$ and $\beta(e)$ denote the vertices and edges which have been added to the graph $F$ from before. Note that all $\beta(v)$ will be sinks. The range and the source maps are extended from $E$ by setting $s(\beta(e))=s(e)$ and $r(\beta(e))=\beta(r(e))$.
If $E$ is row-finite, then $H_{\infty}^{\text{fin}}=\emptyset$, and we get $F=E/H$, as above.
By \cite[Corollary 3.5]{BHRS02} we have that $C^*(E)/I_H$ is isomorphic to $C^*(E/H)$. 

This has the important consequence that for any $H\in\Sigma_E$, one gets a short exact sequence of $C^*$-algebras: 
$$
\xymatrix{0 \ar[r] & I_H \ar[r]& C^*(E) \ar[r]& C^*(E/H)\ar[r]& 0.}
$$
As we will describe in the next Section, exactness of the sequence \eqref{exactseqn} follows from considerations of this kind.

\section{Quantum complex projective spaces}
\label{sec:QPS}
We will now introduce our main object of study, namely the \Cs algebras of quantum projective spaces, and describe their K-theory and ideal structure.

For $q\in (0,1)$, the quantum $(2n+1)$-sphere $C(S_q^{2n+1})$ of Vaksman and Soibelman \cite{VS91} is defined as universal $C^*$-algebra generated by $z_0,z_1,...,z_n$ subject to the following relations: 
\begin{equation}
    \label{eq:qps}
\begin{aligned}
&z_iz_j=q^{-1}z_jz_i, \;\;\; \text{for} \; i<j, \ \ z_iz_j^*={q^{-1}}z_j^*z_i, \;\;\; \text{for} \; i\neq j, \\
&z_i^*z_i=z_iz_i^*+(1-q^{2})\sum_{j=i+1}^n z_jz_j^*,  \;\;\; \text{for} \; i=0,\dotsc, n, \\ &\sum_{j=0}^n z_jz_j^*=1.
\end{aligned} 
\end{equation}
The complex projective space $C(\CP_q^n)$ is obtained as the fixed point algebra under the circle action on $C(S^{2n+1}_q)$  given on generators by $z_i\mapsto wz_i, w\in U(1)$, and extended by universality. The fixed point algebra is generated by elements $p_{ij}:=z_i^*z_j$ for $i,j=0,1,\ldots,n$, which satisfy commutation relations that can be obtained from those of the quantum sphere $C(S^{2n+1}_q)$:
\begin{align}\label{rel:qps}
p_{ij}p_{kl}&=q^{\mathrm{sign}(k-i)+\mathrm{sign}(j-l)}\, p_{kl}p_{ij}
 &\hspace{-1.5cm}\mathrm{if}\;i\neq l,\;j\neq k\;, \nn \\
p_{ij}p_{jk}&=q^{\mathrm{sign}(j-i)+\mathrm{sign}(j-k)+1}\, p_{jk}p_{ij}-(1-q^2)
\textstyle{\sum_{l>j}}\, p_{il}p_{lk} &\mathrm{if}\;i\neq k\;, \nn \\
p_{ij}p_{ji}&=
q^{2\mathrm{sign}(j-i)}p_{ji}p_{ij}+
(1-q^2)\left(\textstyle{\sum_{l>i}}\,q^{2\mathrm{sign}(j-i)}p_{jl}p_{lj}
-\textstyle{\sum_{l>j}}\,p_{il}p_{li}\right)
 &\mathrm{if}\;i\neq j,
\end{align}
with $\mathrm{sign}(0):=0$.
The elements $p_{ij}$ are  the matrix entries of an $(n+1)\times(n+1)$ projection $P= (p_{ij})$, and satisfy
$\sum_{j=0}^n p_{ij} p_{jk} = p_{ik}$ and $p_{ij}^*=p_{ji}$. 

\subsection{Quantum sphere and complex projective spaces as graph \texorpdfstring{$C^{*}$}{C*}-algebras}\label{CPGraphAlgebra}
In this subsection, we will recall the main results from \cite{HS02} and describe how odd quantum spheres and projective spaces can be studied within the framework of graph $C^{*}$-algebras. Let $L_{2n+1}$ be the directed graph with $n+1$ vertices, denoted $\{v_1,v_2,...,v_{n+1}\}$ and, for each $i\leq j$, a single edge $e_{ij}$ from $v_i$ to $v_j$. 

As an example, if $n=3$, the graph $L_7$ will be as follows. 
\begin{center}
\begin{tikzpicture}[scale=1.5]
\draw (0,0) -- (6,0);
\filldraw [black] (0,0) circle (1pt);
\filldraw [black] (2,0) circle (1pt);
\filldraw [black] (4,0) circle (1pt);
\filldraw [black] (6,0) circle (1pt);
\draw [->] (0,0) -- (1,0);
\draw [->] (2,0) -- (3,0);
\draw [->] (4,0) -- (5,0);

\draw[] (0,0) to [out=-20,in=-160] (4,0);
\draw [->] (1.99,-0.4) -- (2,-0.4);
\draw[] (2,0) to [out=-20,in=-160] (6,0);
\draw [->] (3.99,-0.4) -- (4,-0.4);
\draw[] (0,0) to [out=-25,in=-155] (6,0);
\draw [->] (2.99,-0.74) -- (3,-0.74);

\draw (0,0.3) circle [radius=0.3cm];
\draw (2,0.3) circle [radius=0.3cm];
\draw (4,0.3) circle [radius=0.3cm];
\draw (6,0.3) circle [radius=0.3cm];
\draw [->] (-0.01,0.6) -- (0.01,0.6);
\draw [->] (1.99,0.6) -- (2.01,0.6);
\draw [->] (3.99,0.6) -- (4.01,0.6);
\draw [->] (5.99,0.6) -- (6.01,0.6);

\node at (0, 0.2)  {$v_1$};
\node at (2, 0.2)  {$v_2$};
\node at (4, 0.2)  {$v_3$};
\node at (6, 0.2)  {$v_4$};
\node at (1, 0.2)  {$e_{12}$};
\node at (3, 0.2)  {$e_{23}$};
\node at (2, -0.25)  {$e_{13}$};
\node at (0, 0.75)  {$e_{11}$};
\node at (2, 0.75)  {$e_{22}$};
\node at (4, 0.75)  {$e_{33}$};
\node at (6, 0.75)  {$e_{44}$};

\end{tikzpicture}
\end{center}
It follows from \cite[Theorem 4.4]{HS02} that $C(S_q^{2n+1})$  is isomorphic to the graph $C^*$-algebra $C^*(L_{2n+1})$.

Under the isomorphism of $C(S_q^{2n+1})$ and $C^*(L_{2n+1})$, the $U(1)$-action on $C(S_q^{2n+1})$ defining $C(\CP_q^n)$ becomes 
$$
S_{e_{ij}}\mapsto 
wS_{e_{ij}}  \ \ \ P_{v_i}\mapsto P_{v_i}, \ w\in U(1),
$$
which is precisely the gauge action, $\gamma$, on the graph $C^*$-algebra $C^*(L_{2n+1})$. 

In order to realize $C(\CP_q^n)$ as a graph \Cs algebra, consider the directed graph $F_n$ with vertices $\{w_1,...,w_{n+1}\}$ and infinitely many edges from $w_i$ to $w_j$ if $i<j$ for $i,j=1,...,{n+1}$. 

As an example, if $n=3$, the graph $F_3$ will be as follows. 
\begin{center}
\begin{tikzpicture}[scale=1.5]
\draw (0,0) -- (6,0);
\filldraw [black] (0,0) circle (1pt);
\filldraw [black] (2,0) circle (1pt);
\filldraw [black] (4,0) circle (1pt);
\filldraw [black] (6,0) circle (1pt);
\draw [->] (0,0) -- (1,0);
\draw [->] (2,0) -- (3,0);
\draw [->] (4,0) -- (5,0);

\draw[] (0,0) to [out=-20,in=-160] (4,0);
\draw [->] (1.99,-0.4) -- (2,-0.4);
\draw[] (2,0) to [out=-20,in=-160] (6,0);
\draw [->] (3.99,-0.4) -- (4,-0.4);
\draw[] (0,0) to [out=-25,in=-155] (6,0);
\draw [->] (2.99,-0.74) -- (3,-0.74);

\node at (0, 0.2)  {$w_1$};
\node at (2, 0.2)  {$w_2$};
\node at (4, 0.2)  {$w_3$};
\node at (6, 0.2)  {$w_4$};
\node at (1, 0.2)  {$(\infty)$};
\node at (3, 0.2)  {$(\infty)$};
\node at (5, 0.2)  {$(\infty)$};
\node at (1.75, -0.22)  {$(\infty)$};
\node at (4.25, -0.22)  {$(\infty)$};
\node at (3, -0.6)  {$(\infty)$};

\end{tikzpicture}
\end{center}

\noindent Then, as stated in \cite{HS02}, we have 
$$
C(\CP_q^n)\cong C^*(L_{2n+1})^{\gamma}\cong C^*(F_n).
$$
The proof essentially relies on considering all the paths $\alpha$ and $\beta$ in the graph $L_{2n+1}$ such that $S_{\alpha}S_{\beta}^*$ is invariant under the gauge action $\gamma$. 

\subsubsection{Representations of $C(S_q^{2n+1})$ and $C(\CP_q^n)$}
An irreducible representation of the Vaksman--Soibelman $C(S_q^{2n+1})$ is obtained in \cite[Section 5.4]{La05} as follows:
\begin{equation}
\label{eq:repVS}
\psi: C(S_q^{2n+1})\to B(l^2(\nn^n))
\end{equation}
given on the generators by:
$$
\begin{aligned}
\psi(z_0)\zeta(k_1,...,k_n)&=\sqrt{1-q^{2(k_1+1)}}\zeta(k_1+1,...,k_n), \\
\psi(z_j)\zeta(k_1,...,k_n)&=q^{k_1+\cdots +k_{j}}\sqrt{1-q^{2(k_{j+1}+1)}} \zeta(k_1,...,k_{j},k_{j+1}+1,k_{j+2},...,k_n), \\
\psi(z_n)\zeta(k_1,...,k_n)&=q^{k_1+\cdots + k_{n}}\zeta(k_1,...,k_n),
\end{aligned}
$$
for $j=1,...,n$ and $k_1,...,k_{n}\in\nn$. 
\begin{rem}
The representation $\psi$ is obtained from the representation $\psi_1^{2n+1}$ from \cite{La05} by replacing $q$ with $q^{-1}$, renaming the generators $x_i$ by $z_{n-i+1}^*$ for $i=1,...,n+1$ and in the end change the basis such that $k_i$ is replaced by $k_{n-i+1}$. 
\end{rem}
From the representation $\psi$ we can obtain the faithful representation $\pi: C(S_q^{2n+1})\to B(l^2(\nn^n\times\zz))$, as defined in \cite[Lemma 4.1]{HS02}, given on the generators by: 
$$
\begin{aligned}
\pi(z_0)\zeta(k_1,...,k_n,m)&=\sqrt{1-q^{2(k_1+1)}}\zeta(k_1+1,...,k_n.m), \\
\pi(z_j)\zeta(k_1,...,k_n,m)&=q^{k_1+\cdots +k_{j}}\sqrt{1-q^{2(k_{j+1}+1)}} \zeta(k_1,...,k_{j},k_{j+1}+1,k_{j+2},...,k_n,m), \\
\pi(z_n)\zeta(k_1,...,k_n,m)&=q^{k_1+\cdots + k_{n}}\zeta(k_1,...,k_n,m+1),
\end{aligned}
$$
for $j=1,...,n$, $k_1,...,k_{n}\in\nn$ and $m\in\zz$.
\\

A faithful representation of the corresponding graph $C^*$-algebra $C^*(L_{2n+1})$ is given in \cite{HS03} by\[
\rho: C^*(L_{2n+1})\to B(l^2(\nn^n\times \zz))
\]
such that 
\begin{equation}\label{RepRho}
\begin{aligned}
\rho(P_{v_{n+1}})\xi(k_1,...,k_n,m)&= \delta_{k_1,0}\cdots\delta_{k_{n},0}\xi(k_1,...,k_n,m), \\
\rho(P_{v_j})\xi(k_1,...,k_n,m)&=\delta_{k_1,0}\cdots \delta_{k_j,0}(1-\delta_{k_{j+1},0})\xi(k_1,...,k_n,m), \\
\rho(S_{e_{n+1,n+1}})\xi(k_1,...,k_n,m)&=\delta_{k_1,0}\cdots \delta_{k_n,0}\xi(k_1,...,k_n,m+1), \\
\rho(S_{e_{j,n+1}})\xi(k_1,...,k_n,m)&=\delta_{k_1,0}\cdots\delta_{k_n,0}\xi(k_1,...,k_j,k_{j+1}+1,k_{j+2},...,k_n,m), \\
\rho(S_{i,j})\xi(k_1,...,k_n,m)&=\delta_{k_1,0}\cdots \delta_{k_{j},0}(1-\delta_{k_{j+1},0})\xi(k_1,...,k_i,k_{i+1}+1,k_{i+2},...,k_n,m),
\end{aligned}
\end{equation}
for $j=1,...,n, i=1,...j, m\in \zz$ and $k_1,...,k_n\in \nn$. Here $\delta$ is the Kronecker symbol. 

When working with the isomorphism $C(S_q^{2n+1})\cong C^*(L_{2n+1})$, the following result will become useful: \begin{prop}{\cite[Remark 4.5]{HS03}}
The map $\rho^{-1}\circ \pi$ implements an isomorphism between $C(S_q^{2n+1})$ and $C^*(L_{2n+1})$.
\end{prop}
In \cite{HS02} Hong and Szyma\'nski provide and explicit form of the isomorphism.

We then obtain faithful representations of $C(\CP_q^n)$ and $C^*(F_n)$ by restricting the representations $\pi$ and $\rho$ respectively. Later in Section~\ref{ProjectionsFredholmModules}, we will show how these two representations relate to each other and use this fact to construct a different basis of generators for the $K$-theory of quantum projective spaces. 

\subsection{Ideal structure and extensions}
We will now exploit the description of the ideal structure of a graph \Cs algebra in terms of hereditary and saturated subsets presented in Subsection~\ref{sub:gauge}, to obtain the $C^*$-algebra extension \eqref{exactseqn}.

In $F_n$, we consider the hereditary and saturated subset $H:=\{w_{n+1}\}$. In this case $H_{\infty}^{\text{fin}}=\emptyset$ since $w_{n+1}$ is a sink. Then $C^*(F_n)/I_H\cong C^*(F_n/\{w_{n+1}\})$ which is $C^*(F_{n-1})$. The ideal $I_{\{w_{n+1}\}}$ is isomorphic to $\mathcal{K}$. 
Indeed, by \cite[Lemma 3.2]{BHRS02} we have 
$$I_{\{ w_{n+1}\}}=\overline{\text{span}}\{s_\alpha s_\beta^*| \ \alpha,\beta\in E^*, \ r(\alpha)=r(\beta)=w_{n+1}\}.$$
It can be shown that $\{f_{\alpha,\beta}:=s_{\alpha}s_{\beta}^*| \alpha,\beta\in E^*, r(\alpha)=r(\beta)=w_{n+1}\}$ forms a set of matrix units $I_{\{w_{n+1}\}}$. Hence $I_{\{w_{n+1}\}}\cong \mathcal{K}(l^2(\{\alpha\in E^*| \ r(\alpha)=w_{n+1}\}))$.

We then obtain a short exact sequence:
\begin{equation}\label{splitting}
\xymatrix{0 \ar[r] & \cK \ar[r]_-{j_n} & C(\CP^n_q) \ar[r]_{q_n}  & C(\CP^{n-1}_q) \ar[r] & 0}.
\end{equation}

In Section \ref{SplittingProjectivespace} we will prove that the exact sequence is split exact. This is a crucial step in our construction of an explicit $KK$-equivalence between $C(\CP_q^{n})$ and $\mathbb{C}^{n+1}$. 

\subsection{K-theory and K-homology of quantum projective spaces} 

As mentioned in the introduction, the $K$-theory groups for the \Cs algebras of quantum projective spaces $C(\CP^n_q)$ are given by
\[K_0(C(\CP_q^n))\cong \mathbb{Z}^{n+1}, \qquad  K_1(C(\CP_q^n))\cong \{0\}.\]

This fact can be proved by viewing the $C^*$-algebra $C(\CP^n_q)$ as the graph $C^*$-algebra $C^*(F_n)$. Since $V_{F_{n}}=\emptyset$, using \eqref{eq:mapKth} and \eqref{eq:kercoker}, we obtain $K_{F_{n}}:\{0\}\to \mathbb{Z}^{n+1}$ which has cokernel $\mathbb{Z}^{n+1}$ and the kernel is $0$.

The dual result for the $K$-homology of quantum projective spaces is obtained similarly and leads to   \[K^0(C(\CP^n_q))\simeq\mathbb{Z}^{n+1}, \qquad K^1(C(\CP^n_q))\simeq \lbrace 0 \rbrace.\]

\begin{rem}
\label{rem:comm}
One should compare those results with their analogs in the commutative case, where one also has a $KK$-equivalence between the algebras $C(\CP^{n})$ and $\mathbb{C}^{n+1}$.  Let $\CP^{n-1}$ and $\CP^{n}$ denote the complex projective space of $\mathbb{C}^{n}$ and $\mathbb{C}^{n+1}$, respectively.  
Since $\CP^{n-1}$ is a closed subspace of the compact topological space $\CP^{n}$, the corresponding \Cs algebras of continuous functions fit into an extension of the form
\begin{equation}\label{eq:comm_ext}
\xymatrix{0 \ar[r] & C_0 (\mathbb{C}^{n}) \ar[r] & C(\CP^n) \ar[r]  & C(\CP^{n-1}) \ar[r] & 0},
\end{equation}
which induces a corresponding six-term exact sequence in $K$-theory generalizing the relative K-theory exact sequence in topological $K$-theory \cite[Corollary~II.3.23]{Ka78}. It follows that the K-groups of all $C(\CP^n)$ can be computed inductively, obtaining that they are equal to those of $\mathbb{C}^{n+1}$. A crucial step in the computation is the observation that by Bott periodicity $K_i(C_0 (\mathbb{C}^{n})) \simeq K_i(\mathbb{C})$ for $i=0,1$. Here again, the $KK$-equivalence follows from the fact that all commutative \Cs algebras are in the UCT class.
\end{rem}

As described in the introduction $C(\CP_q^n)$ is in the UCT class and hence $C(\CP_q^n)$ is $KK$-equivalent to $\mathbb{C}^{n+1}$. Remark~\ref{rem:comm} then implies that it is also $KK$-equivalent to its commutative counterpart $C(\CP^n)$.

\section{A splitting for the defining extension of quantum projective spaces}\label{SplittingProjectivespace}

In this section, we construct a splitting of the exact sequence \eqref{splitting} from which we obtain an explicit $KK$-equivalence between $C(\CP_q^{n})$ and $C(\CP_q^{n-1})\oplus \cK$. By induction, and up to Morita equivalence, this will allow us to obtain the desired $KK$-equivalence between $C(\CP_q^{n})$ and $\mathbb{C}^{n+1}$.

As mentioned in the Introduction, the existence of such a splitting is a direct consequence of the Universal Coefficient Theorem \cite{RoSch87}. While this observation is certainly well-known to the experts, we restate it here for the sake of completeness. 

\begin{lem}
\label{lem:split}
Let $A$ be a separable \Cs algebra in the UCT-class, with $K_0(A)$ free abelian and vanishing $K_1(A)$. Let $m \geq 1$ and denote by $\mathcal{K}$ the algebra of compact operators. Then any extension of $A$ by $\mathcal{K}^{\oplus m}$ splits.
\end{lem}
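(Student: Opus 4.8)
The plan is to recognise the statement as the vanishing of a single $KK^1$-group and then read off the splitting from the classification of extensions. Write $B:=\mathcal{K}^{\oplus m}$ and fix an extension $0 \to B \to E \to A \to 0$, encoded by its Busby invariant $\tau\colon A \to \mathcal{Q}(B)=M(B)/B$. Since every \Cs algebra in the UCT class is nuclear, $A$ is nuclear; hence by the Choi--Effros lifting theorem the extension is semisplit, its class is invertible, and the semigroup of extensions of $A$ by the stable, $\sigma$-unital algebra $B$ is the group $\mathrm{Ext}(A,B)$. By Kasparov's theorem \cite{Kas80} this group is naturally isomorphic to $KK^1(A,B)$, and the extension splits exactly when its class is the zero element. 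It therefore suffices to show $KK^1(A,B)=0$.

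The second step is a direct computation with the UCT \cite{RoSch87}. Since $B=\mathcal{K}^{\oplus m}$ has $K_0(B)\cong\zz^m$ and $K_1(B)=0$, the Rosenberg--Schochet sequence for $KK^1(A,B)$ reads
\[
0 \to \Ext^1_{\zz}(K_0(A),K_0(B)) \oplus \Ext^1_{\zz}(K_1(A),K_1(B)) \to KK^1(A,B) \to \Hom(K_0(A),K_1(B)) \oplus \Hom(K_1(A),K_0(B)) \to 0 .
\]
Here freeness of $K_0(A)$ makes $\Ext^1_{\zz}(K_0(A),K_0(B))$ vanish, the hypothesis $K_1(A)=0$ kills both $\Ext^1_{\zz}(K_1(A),K_1(B))$ and $\Hom(K_1(A),K_0(B))$, and $K_1(B)=0$ kills $\Hom(K_0(A),K_1(B))$. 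Thus both outer terms are zero and $KK^1(A,B)=0$. Note that both hypotheses on $K_*(A)$ are genuinely used: freeness controls the $\Ext$-term, while the vanishing of $K_1(A)$ controls the $\Hom$-term.

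Combining the two steps, the class of every extension of $A$ by $\mathcal{K}^{\oplus m}$ is zero, so the extension splits. The delicate point, and the step I expect to require the most care, is the passage in the first paragraph from the vanishing of the abstract $\mathrm{Ext}$-class to the existence of a genuine \emph{*-homomorphic} section $A \to E$, as opposed to merely a completely positive contractive or a stable splitting. This is precisely where nuclearity of $A$ together with the absorption theorem (the Voiculescu--Kasparov theorem, which identifies the zero class of $\mathrm{Ext}(A,B)$ with the split extensions; see \cite[Ch.~15]{Bl98}) are essential: without them one would only recover a completely positive lift of $\tau$, not an honest \Cs-algebra homomorphism splitting the quotient map.
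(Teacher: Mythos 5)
Your proof is correct and follows essentially the same route as the paper's: both identify the group of extensions with $KK^1(A,\mathcal{K}^{\oplus m})$ and show it vanishes via the UCT, using freeness of $K_0(A)$ to kill the $\Ext^1_{\mathbb{Z}}$ term and $K_1(A)=0$ (together with $K_1(\mathcal{K}^{\oplus m})=0$) to kill the $\Hom$ term. You are in fact more explicit than the paper about the extension-theoretic technicalities (semisplitness via nuclearity, and the passage from a vanishing $\Ext$-class to an honest $*$-homomorphic splitting via absorption), which the paper leaves implicit; the only caveat is that your claim that $A$ is automatically nuclear is valid when ``UCT-class'' is read as the bootstrap class, which is what the paper intends.
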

\begin{proof}
Extensions of the form \[\xymatrix{0 \ar[r] & \cK^{\oplus m} \ar[r] & E \ar[r] & A  \ar[r] & 0},
 \]
 are classified by the Kasparov group $KK^1(A, \mathcal{K}^{\oplus m}) \simeq KK^1(A, \mathcal{K})^{\oplus m} $, which we can describe in terms of the K-groups of $A$ thanks to the UCT and the Morita equivalence between $\mathcal{K}$ and $\mathbb{C}$. 
 
 Since $K_0(A)$ is a free abelian group, the group $\mathrm{Ext}^1_{\mathbb{Z}}(K_0(A),\mathbb{Z})$ vanishes, yielding
  \[ KK_1(A, \mathcal{K}^m) \simeq \mathrm{Hom}_{\mathbb{Z}} (K_1(A), \mathbb{Z})^{\oplus m} \oplus \mathrm{Hom}_{\mathbb{Z}} (K_0(A), \lbrace 0 \rbrace)^{\oplus m} \simeq \lbrace 0 \rbrace,\]
by virtue of our assumption on $K_1(A)$. It follows that there are no non-trivial extension of the form above, that is, all such extensions must necessarily split.
\end{proof}
As a consequence,  all the defining extensions for quantum projective spaces split.  This is also true for a class of weighted projective spaces satisfying a suitable assumption on the weight vector, like those studied in \cite{BS16} and \cite{A18}, but we shall postpone the treatment of that case to later work, as the question regarding which graphs underlie such algebras has not been settled yet. It is worth noting that an explicit $KK$-equivalence in the one-dimensional case, that is for quantum teardrops, can be found in \cite{AKL16}. 

We shall now describe our splitting explicitly in the graph algebra picture. Since we have to consider two complex projective spaces at once, we will denote vertices and edges of their graphs 
with different letters. 
\begin{figure}[H]

\begin{center}
\begin{tikzpicture}[scale=1.5]
\draw (0,0) -- (2,0);
\draw  [dashed] (2,0) -- (4,0);
\draw (4,0) -- (6,0);
\filldraw [black] (0,0) circle (1pt);
\filldraw [black] (2,0) circle (1pt);
\filldraw [black] (4,0) circle (1pt);
\filldraw [black] (6,0) circle (1pt);
\draw [->] (0,0) -- (1,0);
\draw [->] (4,0) -- (5,0);

\draw[] (0,0) to [out=20,in=160] (4,0);
\draw [->] (1.99,0.4) -- (2,0.4);
\draw[] (2,0) to [out=20,in=160] (6,0);
\draw [->] (3.99,0.4) -- (4,0.4);
\draw[] (0,0) to [out=-25,in=-155] (6,0);
\draw [->] (2.99,-0.74) -- (3,-0.74);

\node at (-1, 0)  {$F_{n}$};
\node at (0, 0.2)  {$w_1$};
\node at (2, 0.2)  {$w_2$};
\node at (4, 0.2)  {$w_{n}$};
\node at (6, 0.2)  {$w_{n+1}$};
\node at (1.4, -0.2)  {$(\infty)$ $f_{12}^m$};
\node at (4.5, -0.2)  {$(\infty)$ $f_{n(n+1)}^m$};
\node at (1.75, 0.65)  {$(\infty)$ $f_{1n}^m$};
\node at (4.25, 0.65)  {$(\infty)$ $f_{2(n+1)}^m$};
\node at (3, -0.6)  {$(\infty)$};
\node at (3, -1)  {$f_{1(n+1)}^m$};
\end{tikzpicture}
\end{center}
\caption{The graph $F_n$ such that $C(\CP_q^n)\cong C^*(F_n)$. The symbol $(\infty)$ indicates that there are infinitely many edges between the vertices. 
}
\label{Fig:Graphs}
\end{figure}
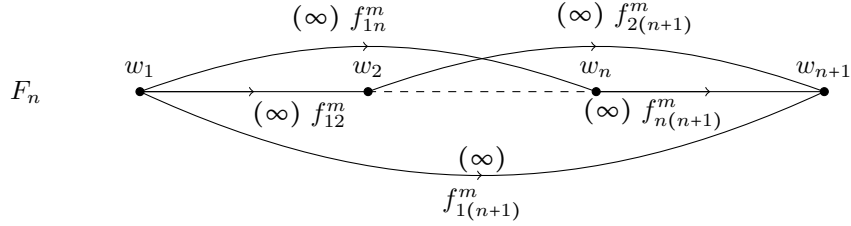

We label the vertices of $F_n$ with $w$ and the edges with $f$, like in Figure~\ref{Fig:Graphs}, while for the graph $F_{n-1}$ we chose the label $v_i$ for $i=1,...,n$ for the vertices and $ e_{ij}^{m}$ for  $1\leq i< j \leq n $, $m\in\mathbb{N}$ for the edges.
Then 
$C(\CP_q^{n-1})$ is isomorphic to the universal $C^*$-algebra $C^*(F_{n-1})$, generated by projections $P_{v_i},i=1,...,n$ and partial isometries $S_{e_{ij}^m}$ for  $1\leq i< j \leq n $, $m\in\mathbb{N}$, subject to the relations 
\begin{align}
P_{v_i}P_{v_j}&=0, && i\neq j, \label{relation1} \\
S_{e_{ij}^m}^*S_{e_{kl}^m}&=0, && (i,j)\neq (k,l), \label{relation2}
\\
S_{e_{ik}^m}^*S_{e_{ik}^m}&=P_{v_k}, && k=2,...,n, i=1,...,k-1, \label{relation3} \\
S_{e_{ki}^m}S_{e_{ki}^m}^*&\leq P_{v_k}, && k=1,...,n-1, i=k+1,...,n. \label{relation4}
\end{align}
Similarly, 
$C(\CP_q^{n})$ is isomorphic to the universal $C^*$-algebra, $C^*(F_{n})$, generated by projections $P_{w_i},i=1,...,n+1$ and partial isometries $S_{f_{ij}^m},1\leq i< j, j=2,...,n+1$, $m\in\mathbb{N}$ subject to the relations 

\begin{align*}
P_{w_i}P_{w_j}&=0, && i\neq j, \\
S_{f_{ij}^m}^*S_{f_{kl}^m}&=0, && (i,j)\neq (k,l),
\\
S_{f_{ik}^m}^*S_{f_{ik}^m}&=P_{w_k}, && k=2,...,n+1, i=1,...,k-1, \\
S_{f_{ki}^m}S_{f_{ki}^m}^*&\leq P_{w_k}, && k=1,...,n, i=k+1,...,n+1. 
\end{align*}

Let us now look at the exact sequence in \eqref{splitting}, which we know to be split exact by virtue of Lemma~\ref{lem:split}.  In this setting, and following the convention described above, the  quotient map $q_n$ is given by 
\begin{align*}
P_{w_{n+1}} &\mapsto 0, && \\
P_{w_i} &\mapsto P_{v_i}, && 1 \leq i \leq n,\\
S_{f^m_{i,n+1}} &\mapsto 0, && 1 \leq i \leq n,\\
S_{f^m_{i,j}} &\mapsto S_{e^m_{i,j}}, && 1 \leq i < j \leq n.
\end{align*}
\begin{thm}
The map $s_n:C(\CP_q^{n-1})\to C(\CP_q^{n})$ defined on generators by 
\begin{align*}
P_{v_i}&\mapsto P_{w_i}, && i=1,2,...,n-1, \\
P_{v_n}&\mapsto P_{w_n}+P_{w_{n+1}}, && \\
S_{e_{ij}^m}&\mapsto S_{f_{ij}^m}, && j\neq n, \\
S_{e_{i,n}^m}&\mapsto S_{f_{i,n}^m}+S_{f_{i,n+1}^m}, && i=1,...,n-1
\end{align*}

is a splitting for the short exact sequence in \eqref{splitting} 
\end{thm}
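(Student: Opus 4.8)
The plan is to lean on the universal property of $C(\CP_q^{n-1})\cong C^*(F_{n-1})$. Since this algebra is universal among $C^*$-algebras generated by projections and partial isometries satisfying \eqref{relation1}--\eqref{relation4}, to show that $s_n$ is a well-defined $*$-homomorphism it suffices to check that the prescribed images of the generators satisfy those same relations inside $C(\CP_q^{n})\cong C^*(F_{n})$. Note that $F_{n-1}$ has no regular vertices, since $V_{F_{n-1}}=\emptyset$, so there is no Cuntz--Krieger relation of type (v) to verify. Once $s_n$ is seen to be a $*$-homomorphism, it remains to confirm $q_n\circ s_n=\mathrm{id}$, which I would do by evaluating on generators. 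Throughout, the key structural feature is that the only generators sent to a \emph{sum} are $P_{v_n}\mapsto P_{w_n}+P_{w_{n+1}}$ and $S_{e_{i,n}^m}\mapsto S_{f_{i,n}^m}+S_{f_{i,n+1}^m}$; every other generator is sent to a single generator of $C^*(F_n)$, so the relations not involving the index $n$ reduce at once to the corresponding relations in $C^*(F_n)$.

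I would dispose of relations \eqref{relation1}--\eqref{relation3} first, as these are routine case distinctions according to whether an index equals $n$. For \eqref{relation1} one uses $P_{w_i}P_{w_j}=0$ for $i\neq j$ together with the orthogonality of $P_{w_n}$ and $P_{w_{n+1}}$ to the remaining vertex projections. For \eqref{relation2}, expanding $s_n(S_{e_{ij}^m})^*s_n(S_{e_{kl}^{m'}})$ produces only terms of the form $S_f^*S_{f'}$ with $f\neq f'$ distinct edges of $F_n$ (the ranges $n$ and $n+1$ always differ, and distinct labels give distinct edges), each of which vanishes by relation (ii) in $C^*(F_n)$. For \eqref{relation3} with $k=n$, the computation
\[
(S_{f_{i,n}^m}+S_{f_{i,n+1}^m})^*(S_{f_{i,n}^m}+S_{f_{i,n+1}^m})=P_{w_n}+P_{w_{n+1}}
\]
holds since the two diagonal terms give $P_{w_n}$ and $P_{w_{n+1}}$ by relation (iii), while the two cross terms vanish because $f_{i,n}^m$ and $f_{i,n+1}^m$ have different ranges; the right-hand side is precisely $s_n(P_{v_n})$.

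The hard part will be relation \eqref{relation4}, because here the relevant cross terms have the form $S_{f_{k,n}^m}S_{f_{k,n+1}^m}^*$, which involve range projections rather than $S^*S$ products and therefore do \emph{not} vanish by relation (ii). Rather than computing them, I would argue structurally. Having already verified \eqref{relation3}, the element $t:=s_n(S_{e_{k,n}^m})=S_{f_{k,n}^m}+S_{f_{k,n+1}^m}$ satisfies $t^*t=P_{w_n}+P_{w_{n+1}}$, a projection, so $t$ is a partial isometry and $tt^*$ is again a projection. Using the standard consequence $P_{s(e)}S_e=S_e$ of relation (iv) in $C^*(F_n)$ (which follows from $S_e=S_eS_e^*S_e$ together with $S_eS_e^*\leq P_{s(e)}$), and the fact that both $f_{k,n}^m$ and $f_{k,n+1}^m$ have source $w_k$, one gets $P_{w_k}t=t$ and hence $P_{w_k}(tt^*)P_{w_k}=tt^*$. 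Since $tt^*$ is a projection, this forces $tt^*\leq P_{w_k}=s_n(P_{v_k})$ (here $k\leq n-1$, so $s_n(P_{v_k})=P_{w_k}$), establishing \eqref{relation4}.

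With all four relations verified, universality yields the desired $*$-homomorphism $s_n$. Finally I would check $q_n\circ s_n=\mathrm{id}$ on generators: $q_n(P_{w_i})=P_{v_i}$ for $i\leq n-1$; $q_n(P_{w_n}+P_{w_{n+1}})=P_{v_n}+0=P_{v_n}$; $q_n(S_{f_{ij}^m})=S_{e_{ij}^m}$ for $j\neq n$; and $q_n(S_{f_{i,n}^m}+S_{f_{i,n+1}^m})=S_{e_{i,n}^m}+0=S_{e_{i,n}^m}$. Since $q_n\circ s_n$ fixes a generating set of $C(\CP_q^{n-1})$, it is the identity, so $s_n$ splits \eqref{splitting}.
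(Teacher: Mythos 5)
Your proposal is correct and follows essentially the same route as the paper: verify the Cuntz--Krieger relations \eqref{relation1}--\eqref{relation4} for the images of the generators, invoke universality of $C^*(F_{n-1})$, and check $q_n\circ s_n=\mathrm{id}$ on generators. The only (cosmetic) difference is in relation \eqref{relation4}, where you deduce $tt^*\leq P_{w_k}$ from $P_{w_k}t=t$ and the fact that $tt^*$ is a projection, while the paper verifies $P_{w_i}B_i=B_i$ by a direct expansion using $P_{w_i}S_{f_{i,n}^m}S_{f_{i,n}^m}^*=S_{f_{i,n}^m}S_{f_{i,n}^m}^*$ --- the same underlying computation.
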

\begin{proof}
To prove that $s_n$ is a $*$-homomorphism we will show that its target elements satisfy the graph algebra relations \eqref{relation1}-\eqref{relation4}. It will then follow by universality that $s_n$ is a $*$-homomorphism. 

First, we clearly have that $P_{v_i}$ are all mapped to mutually orthogonal projections, that the unit of $C^*(F_{n-1})$ is mapped to the unit of $C^*(F_n)$, and that relation (\ref{relation2}) is satisfied. It is also clear that the partial isometries $S_{e_{ij}^m}$ are all mapped to partial isometries in $C^*(F_n)$ when $j\neq n$. When $j=n$ we have that  $S_{f_{i,n}^m}+S_{f_{i,n+1}^m}$ is indeed a partial isometry since 
$$
\begin{aligned}
&(S_{f_{i,n}^m}+S_{f_{i,n+1}^m})(S_{f_{i,n}^m}+S_{f_{i,n+1}^m})^*(S_{f_{i,n}^m}+S_{f_{i,n+1}^m})
\\
&= (S_{f_{i,n}^m}+S_{f_{i,n+1}^m})(S_{f_{i,n}^m}^*S_{f_{i,n}^m}+S_{f_{i,n+1}^m}^*S_{f_{i,n+1}^m}) \\
&= (S_{f_{i,n}^m}+S_{f_{i,n+1}^m})(P_{w_n}+P_{w_{n+1}}) \\
&= (S_{f_{i,n}^m}P_{w_n}+S_{f_{i,n+1}^m}P_{w_{n+1}})(P_{w_n}+P_{w_{n+1}}) \\
&=(S_{f_{i,n}^m}+S_{f_{i,n+1}^m}).
\end{aligned}
$$
Relation (\ref{relation3}) is clearly satisfied for all $S_{f_{ij}^m}$ with $j\neq n$. For $j=n$ we have 
$$
\begin{aligned}
S_{e_{in}^m}^*S_{e_{in}^m}\mapsto (S_{f_{i,n}^m}+S_{f_{i,n+1}^m})^*(S_{f_{i,n}^m}+S_{f_{i,n+1}^m})=P_{w_n}+P_{w_{n+1}}\leftmapsto P_{v_n}
\end{aligned}
$$
and relation (\ref{relation3}) is then satisfied in this case. 

Relation(\ref{relation4}) is also clearly obtained for all $S_{f_{ij}^m}$ with $j\neq n$. For $j=n$ we have
$$
P_{w_i}\leftmapsto P_{v_i}\geq S_{e_{in}^m}S_{e_{in}^m}^*\mapsto (S_{f_{i,n}^m}+S_{f_{i,n+1}^m})(S_{f_{i,n}^m}+S_{f_{i,n+1}^m})^*.
$$
Hence we have to show
$$
B_i:=(S_{f_{i,n}^m}+S_{f_{i,n+1}^m})(S_{f_{i,n}^m}+S_{f_{i,n+1}^m})^* \leq P_{w_i}
$$
which is equivalent to proving that $P_{w_i}B_i=B_i$. We have 
$$
P_{w_i}B_i=P_{w_i}(S_{f_{i,n}^m}S_{f_{i,n}^m}^*S_{f_{i,n}^m}+S_{f_{i,n+1}^m}S_{f_{i,n+1}^m}^*S_{f_{i,n+1}^m})(S_{f_{i,n}^m}+S_{f_{i,n+1}^m})^*=B_i
$$
since $P_{w_i}S_{f_{i,n}^m}S_{f_{i,n}^m}^*=S_{f_{i,n}^m}S_{f_{i,n}^m}^*$ and $P_{w_i} S_{f_{i,n+1}^m}S_{f_{i,n+1}^m}^*= S_{f_{i,n+1}^m}S_{f_{i,n+1}^m}^* $. 

It follows from an easy computation that $q_n\circ s_n$ is the identity on the generators of $C^*(F_{n-1})$ and therefore $q_n\circ s_n=\id_{C^*(F_{n-1})}$. 
\end{proof}
To summarize our result, for every $n\geq 1$, we have a split exact sequence
\begin{equation*}\label{splitexact}
\xymatrix{0 \ar[r] & K \ar[r]_{j_n\ \ \ } & C(\CP_q^n) \ar[r]_{q_n} & C(\CP_q^{n-1}) \ar[r]  \ar@/_/[l]_{s_n} & 0}.
\end{equation*}
\begin{rem}
In the rest of this work, especially in Section \ref{explicitKKequiv}, we will mostly be working with the graph $C^*$-algebra picture. We choose to identify $C(\CP_q^n)$ with $C^*(F_n)$, and use the former notation.
\end{rem}

\section{$KK$-equivalences for split exact sequences}
\label{sec:KKequiv}
We will now recall how any split extension of \Cs algebras gives a $KK$-equivalence between the algebra \emph{in the middle} and the \Cs algebraic direct sum of the other two. This relies on the following result, which in \cite{Bl98} is stated as an exercise.
\begin{thm}[{\cite[Exercise 19.9.1]{Bl98}}]\label{SEKKequiv}
For any split exact sequence of graded separable $C^*$-algebras 
$$
\xymatrix{0 \ar[r] & J  \ar[r]^{j} & E \ar[r]_{q} & B \ar[r]  \ar@/_/[l]_{s} & 0}
$$
the element $[j] \oplus [s] \in KK(J \oplus B, E) $ is a $KK$-equivalence. 
\end{thm}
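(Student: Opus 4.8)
The plan is to prove that $\alpha := [j]\oplus[s]\in KK(J\oplus B,E)$ is invertible for the Kasparov product by writing down an explicit inverse and checking the two composites directly. Using the additivity of $KK$, $KK(J\oplus B,E)\cong KK(J,E)\oplus KK(B,E)$ and $KK(E,J\oplus B)\cong KK(E,J)\oplus KK(E,B)$, so I look for an inverse of the shape $\beta:=\beta_1\oplus[q]$, where $[q]\in KK(E,B)$ is the class of the quotient map and $\beta_1\in KK(E,J)$ is to be built. Expanding $\alpha\otimes_E\beta\in KK(J\oplus B,J\oplus B)$ and $\beta\otimes_{J\oplus B}\alpha\in KK(E,E)$ by bilinearity and reading off the entries of the resulting $2\times2$ ``matrix'' $KK(J\oplus B,J\oplus B)\cong\bigoplus_{X,Y}KK(X,Y)$, the requirement $\alpha\otimes_E\beta=1_{J\oplus B}$ becomes the four entrywise identities
\begin{align*}
[j]\otimes_E\beta_1 &= 1_J, & [j]\otimes_E[q] &= 0, \\
[s]\otimes_E\beta_1 &= 0, & [s]\otimes_E[q] &= 1_B,
\end{align*}
while $\beta\otimes_{J\oplus B}\alpha=1_E$ collapses to the single identity $\beta_1\otimes_J[j]+[q]\otimes_B[s]=1_E$. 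The two identities on the right are immediate from functoriality: $[j]\otimes_E[q]=[q\circ j]=0$ since $J=\ker q$, and $[s]\otimes_E[q]=[q\circ s]=[\id_B]=1_B$ since $q\circ s=\id_B$.

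The heart of the matter is the construction of $\beta_1$. The key observation is that, because $s$ is a genuine homomorphism section, the two $*$-homomorphisms $\id_E$ and $s\circ q$ from $E$ to $E$ agree modulo the ideal $J$: for every $e\in E$ one has $q\bigl(e-s q(e)\bigr)=q(e)-q(e)=0$, so $e-sq(e)\in\ker q=J$. Hence the pair $(\id_E,\,s\circ q)$ is a Cuntz quasihomomorphism from $E$ into the ideal $J\subseteq E$, and I define $\beta_1\in KK(E,J)$ to be its class. The three remaining identities are then standard consequences of the quasihomomorphism calculus. Composing with the ideal inclusion $j$ recovers the difference of the two legs, $\beta_1\otimes_J[j]=[\id_E]-[s\circ q]=1_E-[q]\otimes_B[s]$, which is exactly the required $\beta_1\otimes_J[j]+[q]\otimes_B[s]=1_E$. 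Pulling the quasihomomorphism back along $j$ gives the pair $(\id_E\circ j,\,sq\circ j)=(j,0)$ because $q\circ j=0$, and a quasihomomorphism whose legs are an inclusion into the ideal and the zero map represents that inclusion, so $[j]\otimes_E\beta_1=[\id_J]=1_J$. Pulling back along $s$ gives $(\id_E\circ s,\,sq\circ s)=(s,s)$ because $q\circ s=\id_B$, and a quasihomomorphism with coinciding legs is null, so $[s]\otimes_E\beta_1=0$.

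The only genuinely delicate points are the construction of $\beta_1$ and the bookkeeping: once $\beta_1$ is in hand, every identity above is a formal consequence of bilinearity, associativity, and functoriality of the Kasparov product. I therefore expect the main obstacle to be making the reduction ``$\alpha\otimes_E\beta=1_{J\oplus B}$ is equivalent to four entries'' fully rigorous, i.e.\ justifying the matrix decomposition of the product from the additivity of $KK$ in each variable, and checking that the quasihomomorphism identities I invoke hold verbatim in the $\zz/2$-graded, separable category in which the theorem is stated (here all maps are degree-preserving and the relevant classes sit in degree $0$, so no sign subtleties intervene). A pleasant by-product of this approach is that the inverse is produced \emph{explicitly} as $\beta=[(\id_E,\,s\circ q)]\oplus[q]$, which is precisely the data fed into the splitting-homomorphism construction used in the sequel.
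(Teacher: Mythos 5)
Your proposal is correct and follows essentially the same route as the paper: the inverse is the class $[\pi]\oplus[q]$ with $[\pi]$ the quasi-homomorphism $(1,\,s\circ q)$ (the splitting homomorphism), and the verification reduces to the same identities $[\pi]\otimes_J[j]+[q]\otimes_B[s]=1_E$ and the entrywise computations of $(j^*+s^*)([\pi]\oplus[q])$, which the paper delegates to \cite[Proposition 18.7.2]{Bl98} while you carry out the $2\times 2$ bookkeeping explicitly.
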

An explicit inverse to the class $[j] \oplus [s] \in KK(J\oplus B, E) $ is also provided in \cite{Bl98}, through a construction known as the \emph{splitting homomorphism}.
We 
will illustrate this result using Cuntz's quasi-homomorphism picture of $KK$-theory \cite{Cu87}, in which all the involved $C^*$-algebras are assumed to be trivial graded and $\sigma$-unital. Our main references are the article \cite{Hig87} and the monograph \cite{JT91}.
\subsection{\texorpdfstring{$KK_h$}{KK}-theory}
For the sake of simplicity, we will further assume all \Cs algebras to be separable.
\begin{defn}[{\cite[Def. 4.1.1.]{JT91}}]
A $KK_h(A,B)$-cycle is a pair $(\phi_+,\phi_-)$ of $*$-homomorphisms from $A$ to $\mathcal{M}(\mathcal{K}\otimes B)$, such that 
$$
\phi_+(a)-\phi_-(a)\in \mathcal{K}\otimes B
$$
for all $a\in A$. The set of $KK_h(A,B)$-cycles will be denoted by  $\mathbb{F}(A,B)$. 
\end{defn}
Note that a pair of $*$-homomorphisms $(\phi_+,\phi_-)$ satisfying the condition above is also called a quasi-homomorphism from $A$ to $B$.

Let $\phi:A \to B$ be a $*$-homomorphism, and consider the $*$-homomorphism  \[
\label{eq:minproj}
e_{B}:B\to \mathcal{K}\otimes B, \ \ e_{B}(b)=e\otimes b
\]
where $e$ is a minimal projection in $\mathcal{K}$. Then the pair $(e_B \circ \phi, 0)$ is a $KK_h(A,B)$-cycle.

Homotopy of $KK_h$-cycles can be defined in a similar way to homotopy of Kasparov modules, see \cite[Def. 4.1.2.]{JT91}, in a manner that is compatible with homotopy of $*$-homomorphisms. Then $KK_h(A,B)$ is defined as the homotopy classes of $KK_h(A,B)$-cycles. Likewise, one can endow $KK_h(A,B)$ with the structure of an abelian group, as described in \cite[Proposition 4.1.5]{JT91}.

In \cite{Hig87}, Higson proved that the $KK_h(A,B)$ group is isomorphic to the original Kasparov group $KK_0(A,B)$, whenever $A$ and $B$ are considered as trivially graded $C^*$algebras (see also \cite[Theorem 4.1.8]{JT91}).

The KK groups are functorial. In the $KK_h$-picture, this is realized as follows.
Let $f:A\to B$ be a $*$-homomorphism. For any $C^*$-algebra $C$ we define a group homomorphism as follows
$$f^*:KK_h(B,C)\to KK_h(A,C)$$ 
$$
f^*[\phi_+,\phi_-]=[\phi_+\circ f, \phi_-\circ f]
$$
with $(\phi_+,\phi_-)\in \mathbb{F}(B,C)$. 

Functoriality in the other direction requires some little extra care. Let $g: \cK\otimes A\to \cK\otimes B$ be a quasi-unital $*$-homomorphism. Since $g$ is a quasi-unital $*$-homomorphism there exists by \cite[Corollary 1.1.15]{JT91} a strictly continuous extension $\underline{g}: \mathcal{M}(\cK\otimes A)\to \mathcal{M}(\cK\otimes B)$. 

Let $\{u_i\}$ be an approximate unit for $\cK\otimes A$. Identify $\mathcal{M}(\cK\otimes B)$ with $\mathcal{L}_{\cK\otimes B}(\cK\otimes B)$. Then for each $m\in \mathcal{M}(\cK\otimes A)$, 
$$\underline{g}(m): \cK\otimes A\to \cK\otimes B
$$
is given by 
$$
\underline{g}(m)(a)=\lim_i g(u_im)(a)=\lim_i g(mu_i)(a)
$$
for all $a\in \cK\otimes B$.
For any $C^*$-algebra $C$ a group homomorphism can now be defined as follows (cp. \cite[Lemma 4.1.11]{JT91}):
$$g_*:KK_h(C,A)\to KK_h(C,B)$$
$$
g_*[\phi_+,\phi_-]=[\underline{g}\circ \phi_+,\underline{g}\circ \phi_-]
$$
with $(\phi_+,\phi_-)\in \mathbb{F}(C,A)$. 
Note that if $g:A\to B$ is a $*$-homomorphism, then we can still construct $g_*$ by considering $\id_{\cK}\otimes g: \cK\otimes A\to \cK\otimes B$.

Last but not least, there is a bilinear pairing, known as the Kasparov product,
$$
\otimes : KK_h(A,B)\times KK_h(B,C)\to KK_h(A,C)
$$
satisfying the conditions of Theorem 4.2.1 in \cite{JT91}. The pullback and pushforward maps $f^*$ and $g_*$ are realized by taking the left and right Kasparov product with the $KK_h$-class induced by the *-homomorphism $f$ and $g$, respectively. In formulas:
\[
\begin{aligned} f^* &:= [f] \otimes_B - : KK_h(B,C)\to KK_h(A,C), \\
 g_* & :=  - \otimes_A [g]: KK_h(A,B) \to KK_h(A,C). 
 \end{aligned}\]

We recall that two separable $C^*$-algebras $A$ and $B$ are $KK$-equivalent if there exist an element $x\in KK(A,B)$ and $y\in KK(B,A)$ such that $x\otimes_B y=1_A$  and $y\otimes_Ax =1_B$.

\subsection{The splitting homomorphism}
Let us consider a split exact sequence of $C^*$-algebras 
$$
\xymatrix{0 \ar[r] & J  \ar[r]^{j} & E \ar[r]_{q} & B \ar[r]  \ar@/_/[l]_{s} & 0}
$$
We will now recall how to construct an inverse to the class $ [j] \oplus [s] \in KK_h(J\oplus B, E)$.

First of all, consider the $*$-homomorphism $e_E$ defined as in \eqref{eq:minproj}. Denote by $r_J$ the canonical map
given by
\begin{equation*}
\label{eq:canmap}
\begin{aligned}
r_J:\mathcal{M}(\mathcal{K}\otimes E)\to \mathcal{M}(\mathcal{K}\otimes J)\\
    r_{J}(T)(x):=(\id_{\mathcal{K}}\otimes {j}^{-1})(T(\id_{\mathcal{K}}\otimes j)(x))
\end{aligned}
\end{equation*}
for all $x\in\mathcal{K}\otimes J$, $T\in \mathcal{M}(\mathcal{K}\otimes E)$, where $j^{-1}$ is the inverse when we restrict to the image of $j$ see \cite[Exercise 1.1.9]{JT91}. 
The map satisfies 
$$
(\id_\cK\otimes j)(r_J(m)x)=m(\id_\cK\otimes j)(x)
$$
for all $m\in\Ms(\cK\otimes E)$ and $x\in \cK\otimes J$.

Denote by $[\pi] \in KK_h(E,J)$ the class of the quasi-homomorphism $(1, s \circ q)$, which we rewrite as 
$
[\pi]:=[(r_{J}\circ e_{E},r_{J}\circ e_{E}\circ s\circ q)]$ (see also \cite[Lemma~2.13]{Hig87}).

Then we have
\[\begin{aligned}
j_*([\pi])+s_*\circ q_*(1_E)&= 1_E, \\
(j^*+ s^*)([\pi]+q^*(1_B))& = 1_J+1_B=1_{J\oplus B}.
\end{aligned}\]
By \cite[Proposition 18.7.2.]{Bl98}, we conclude that $[j] \oplus [s]\in KK_h(J\oplus B, E)$ is a $KK$-equivalence with inverse $[\pi] \oplus [q]\in KK_h(E,J \oplus B)$.

\section{An explicit $KK$-equivalence between \texorpdfstring{$C(\CP_q^n)$}{CPn} and \texorpdfstring{$\mathbb{C}^{n+1}$}{Cnplusone}}
\label{explicitKKequiv}

We will now apply Theorem \ref{SEKKequiv} to construct an explicit $KK$-equivalence between $C(\CP_q^n)$ and $\cc^{n+1}$ up to Morita equivalence.  Our construction will be a special case of a construction that holds for special---but sufficiently general--families of extensions of \Cs algebras. 

\subsection{An inductive procedure to construct $KK$-equivalences from a family of splittings}
We start out by considering a family $A_n$ of separable \Cs algebras, with $A_0$ satisfying the assumptions in Lemma~\ref{lem:split}, together with extensions 
\begin{equation}
\label{eq:gen_ext}
\xymatrix{0 \ar[r] & \cK^{m_n}  \ar[r]^{j_n} & A_n \ar[r]_{q_n} & A_{n-1} \ar[r]  & 0}
\end{equation}
for $n \geq 1$. Note that, not only does the sequence for $n=1$ split, but we  also obtain by induction that all extensions \eqref{eq:gen_ext} for $n \geq 2$ satisfy the assumptions of Lemma~\ref{lem:split} and likewise split, i.e., we have maps $s_n: A_{n-1} \to A_n$ with $q_n \circ s_n= \id_{A_{n-1}}$.

As a consequence, we have $KK_h$-classes: \[
[j_n]\in KK_h(\cK^{m_n},A_n), \ \  [q_n]\in KK_h(A_n, A_{n-1}), \ \ [s_n]\in KK_h(A_{n-1},A_n)
\]
\[ [\pi_n ] =[(1, s_n\circ q_n)]
\in KK_h(A_n,\cK^{m_n}).
\]

It follows by Theorem~\ref{SEKKequiv} that $[j_n]\oplus [s_n]\in KK_h(\cK^{m_n} \oplus A_{n-1},A_n)$ is a $KK$-equivalence with inverse $[\pi_n]\oplus [q_n]\in KK_h(A_n,\cK\oplus A_{n-1})$. More concretely, for $n=1$ we have \begin{equation}\label{n=1}
\begin{aligned}
&[j_1]\otimes_{A_1}[\pi_1]=1_{\cK^{m_1}}, \ \ [s_1]\otimes_{A_1}[q_1]  =1_{A_0}, \\
&[\pi_1]\otimes_{\cK^{m_1}}[j_1]+ [q_1]\otimes_{\cc}[s_1]=1_{A_1}.
\end{aligned}
\end{equation}
and for $n\geq 2$ 
\begin{equation}\label{ngreater2}
\begin{aligned}
&[j_n]\otimes_{A_n}[\pi_n]=1_{\cK^{m_n}}, \ \ [s_n]\otimes_{A_n}[q_n]=1_{A_{n-1}}
\\
&[\pi_n]\otimes_{\cK^{m_n}}[j_n]+[q_n]\otimes_{A_{n-1}}[s_n]=1_{A_n}.
\end{aligned}
\end{equation}

We are now ready to announce and prove our $KK$-equivalence result. First note that if we let $I_1:= [j_1]\oplus [s_1]$ and $\Pi_1:=[\pi_1]\oplus [q_1]$  then 
$$
\begin{aligned}
I_1\otimes_{A_1} \Pi_1 =1_{\cK^{m_1} \oplus A_0} \ \
\Pi_1\otimes_{\cK^{m_1}\oplus A_0} I_1=1_{A_1},
\end{aligned}
$$
which follows directly from Theorem \ref{SEKKequiv}.

\begin{thm}
\label{thm:KK}
Consider a family of \Cs algebras $A_n, \ n \geq 0$, with extensions 
\begin{equation}
\label{eq:gen_ext2}
\xymatrix{0 \ar[r] & \cK^{m_n}  \ar[r]^{j_n} & A_n \ar[r]_{q_n} & A_{n-1} \ar[r]  & 0}.
\end{equation}Assume further that $A_0$ is separable and in the UCT-class, with $K_0(A)$ free abelian and vanishing $K_1(A)$. Then for every $n \geq 1$ the extension \ref{eq:gen_ext2} splits, with splittings $s_n: A_{n-1} \to A_n$.

Moreover,  let $S_n:=\sum_{j=1}^{n} m_n$, and define $KK$-classes
$$
\begin{aligned}
\Pi_n:=&[\pi_n]\oplus([q_n]\otimes_{A_{n-1}}[\pi_{n-1}])\oplus ([q_{n-1}\circ q_n]\otimes_{A_{n-2}} [\pi_{n-2}])\\ 
&\oplus \cdots\cdots
\oplus ([q_2\circ\cdots \circ q_{n-1}\circ q_n]\otimes_{A_1} [\pi_{1}])\oplus [q_1\circ q_2\circ\cdots \circ q_n],
\end{aligned}
$$
and 
$$
I_n:=[j_n]\oplus [s_n\circ j_{n-1}]\oplus [s_n\circ s_{n-1}\circ j_{n-2}]\oplus\cdots\cdots\oplus [s_n\circ s_{n-1}\circ\cdots\circ s_2\circ j_1]\oplus [s_n\circ s_{n-1}\circ\cdots\circ s_1].
$$
Then $\Pi_n\in KK_h(A_n, \cK^{S_n}\oplus A_0)$ implements a $KK$-equivalence with inverse $I_n\in KK_h(\cK^{S_n} \oplus A_0,A_n)$.
\end{thm}

\begin{proof}
The proof follows by induction on $n$.
For $n=2$ we obtain the following by (\ref{ngreater2}):
$$
\begin{aligned}
I_2\otimes_{A_2}\Pi_2
&= [j_2]\otimes_{A_2} [\pi_2]+[s_2\circ j_1]\otimes_{A_2}([q_2]\otimes_{A_1}[\pi_1])+[s_2\circ s_1]\otimes_{A_2}[q_1\circ q_2] \\
&= 1_{\cK^{m_2}}+ [j_1]\otimes_{A_1}([s_2]\otimes_{A_2}[q_2])\otimes_{A_1}[\pi_1]+[q_1\circ q_2\circ s_2\circ s_1] \\
&= 1_{\cK^{m_2}}+ [j_1]\otimes_{A_1} 1_{A_1}\otimes_{A_1}[\pi_1]+[q_1\circ \id_{A_1}\circ s_1] \\
&= 1_{\cK^{m_2}}+1_{\cK^{m_1}}+1_{A_{0}} \\
&=1_{\cK^{S_2}\oplus A_0}.
\end{aligned}
$$
When taking the product in the other direction we obtain 
$$
\begin{aligned}
\Pi_2&\otimes_{\cK^{m_1+m_2}\oplus A_{0}}I_2 \\
&=
[\pi_2]\otimes_{\cK^{m_2}}[j_2] +([q_2]\otimes_{A_1} [\pi_1])\otimes_{\cK^{m_2}} [s_2\circ j_1]+[q_1\circ q_2]\otimes_{A_{0}} [s_2\circ s_1] \\
&=[\pi_2]\otimes_{\cK^{m_2}} [j_2]+([q_2]\otimes_{A_1}[\pi_1])\otimes_{\cK^{m_1}} ([j_1]\otimes_{A_1}[s_2]) \\
& \hspace{0.2cm}+([q_2]\otimes_{A_1}[q_1])\otimes_{A_{0}} ([s_1]\otimes_{A_1}[s_2]) \\
&= [\pi_2]\otimes_{\cK^{m_2}}[j_2] + [q_2]\otimes_{A_1}\Big([\pi_1]\otimes_{\cK^{m_1}} [j_1]+[q_1]\otimes_{A_{0}}[s_1]\Big)\otimes_{A_1} [s_2] \\
&= [\pi_2]\otimes_{\cK^{m_2}}[j_2] + [q_2]\otimes_{A_1}1_{A_1}\otimes_{A_1} [s_2] \\
&= [\pi_2]\otimes_{\cK^{m_2}}[j_2] + [q_2]\otimes_{A_1} [s_2]\\ &=1_{A_2}.
\end{aligned}
$$
Let us assume that the statement is true for $n-1$, i.e., 
\begin{equation}\label{hypothesis}
I_{n-1}\otimes_{A_{n-1}} \Pi_{n-1}=1_{\cK^{S_{n-1}}\oplus A_{0}}, \ \
\Pi_{n-1}\otimes_{\cK^{S_{n-1}}\oplus A_{0}} I_{n-1}=1_{A_{n-1}}. 
\end{equation}
Then we can rewrite
$$
\begin{aligned}
I_{n}\otimes_{A_{n}} \Pi_{n}
&= [j_n]\otimes_{A_{n}}[\pi_n]+[s_n\circ j_{n-1}]\otimes_{A_{n}}([q_n]\otimes_{A_{n-1}}[\pi_{n-1}])\\
\ \ \ &+[s_n\circ s_{n-1}\circ j_{n-2}]\otimes_{A_{n}}([q_{n-1}\circ q_n]\otimes_{A_{n-2}}[\pi_{n-2}]) \\
\ \ \ &+\cdots +[s_n\circ s_{n-1}\circ \cdots\circ s_2 \circ j_{1}]\otimes_{A_{n}}([q_2\circ \cdots \circ q_{n-1}\circ q_n]\otimes_{A_{1}}[\pi_{1}]) \\
&+ [s_n\circ s_{n-1}\circ\cdots\circ s_1]\otimes_{A_n}[q_1\circ q_2\circ\cdots\circ q_n]  \\
&= [j_n]\otimes_{A_{n}}[\pi_n]+[j_{n-1}]\otimes_{A_{n-1}} ([s_n]\otimes_{A_{n}}[q_n])\otimes_{A_{n-1}}[\pi_{n-1}]\\
\ \ \ &+[j_{n-2}]\otimes_{A_{n-2}}([s_n\circ s_{n-1}]\otimes_{A_{n}}[q_{n-1}\circ q_n])\otimes_{A_{n-2}}[\pi_{n-2}]) \\
\ \ \ &+\cdots +[j_{1}]\otimes_{A_{1}}([s_n\circ s_{n-1}\circ \cdots\circ s_2 ]\otimes_{A_{n}}[q_2\circ \cdots \circ q_{n-1}\circ q_n])\otimes_{A_{1}}[\pi_{1}] \\
&+ [s_n\circ s_{n-1}\circ\cdots\circ s_1]\otimes_{A_{n}}[q_1\circ q_2\circ\cdots\circ q_n]  \\
&= \sum_{m=1}^n [j_m]\otimes_{A_{m}}[\pi_m]+1_{A_{0}} \\
&= 1_{\cK^{S_{n}}\oplus A_{0}}.
\end{aligned}
$$
On the other hand by the induction hypothesis in (\ref{hypothesis}) we have
$$
\begin{aligned}
\Pi_{n}\otimes_{\cK^{S_{n}}\oplus A_{0}} I_n&=[\pi_n]\otimes_{\cK^{m_n}}[j_n] +([q_n]\otimes_{A_{n-1}}[\pi_{n-1}])\otimes_{\cK^{m_{n-1}}} [s_n\circ j_{n-1}] \\ 
&  + ([q_{n-1}\circ q_n]\otimes_{A_{n-2}} [\pi_{n-2}])\otimes_{\cK^{m_{n-2}}} [s_n\circ s_{n-1}\circ j_{n-2}] + \cdots  \\ 
& +([q_2\circ\cdots \circ q_{n-1}\circ q_n]\otimes_{A_{1}} [\pi_{1}])\otimes_{\cK^{m_1}} [s_n\circ s_{n-1}\circ\cdots\circ s_2\circ j_1] \\
& +[q_1\circ q_2\circ\cdots \circ q_n]\otimes_{A_{0}} [s_n\circ s_{n-1}\circ\cdots\circ s_1] \\
&= [\pi_{n}]\otimes_{\cK^{m_n}} [j_n]+[q_n]\otimes_{A_{n-1}}
 +\Big([\pi_{n-1}]\otimes_{\cK^{m_{n-1}}} [j_{n-1}] \\ 
& + ([q_{n-1}]\otimes_{A_{n-2}} [\pi_{n-2}])\otimes_{\cK^{m_{n-2}}} [s_{n-1}\circ j_{n-2}]+ \cdots  \\ 
& +([q_2\circ\cdots \circ q_{n-1}]\otimes_{A_{1}} [\pi_{1}])\otimes_{\cK^{m_{1}}} [s_{n-1}\circ\cdots\circ s_2\circ j_1]
\Big)
\otimes_{A_{n-1}}[s_n] \\
&=  [\pi_{n}]\otimes_{\cK^{m_n}} [j_n]+[q_n]\otimes_{A_{n-1}} \Big(\Pi_{n-1}\otimes_{\cK^{S_{n-1}}\oplus A_{0}} I_{n-1}\Big)\otimes_{A_{n-1}}[s_n] \\
&=  [\pi_{n}]\otimes_{\cK^{m_n}} [j_n]+[q_n]\otimes_{A_{n-1}}[s_n] \\
&= 1_{A_{n}}. 
\end{aligned}
$$
Then $A$ and $\cK^{S_n} \oplus A_{0}$ are $KK$-equivalent by the $KK$-equivalence $\Pi_n$ with inverse $I_n$. 
\end{proof}
\begin{rem}
Note that for $A_0=\mathbb{C}$ we are in the setting of the quantum weighted projective spaces studied by Brzezi\'nski and Szyma\'nski (see their defining extensions in \cite[Proposition~3.2]{BS16}), of which our spaces are obviously a special case. 
\end{rem}

\subsection{The $KK$-equivalence} 
We can now apply Theorem~\ref{thm:KK} to our setting to obtain the desired result.
For $n\geq 2$ we define 
$$
\begin{aligned}
\Pi_n:=&[\pi_n]\oplus([q_n]\otimes_{C(\CP_q^{n-1})}[\pi_{n-1}])\oplus ([q_{n-1}\circ q_n]\otimes_{C(\CP_q^{n-2})} [\pi_{n-2}])\\ 
&\oplus \cdots\cdots
\oplus ([q_2\circ\cdots \circ q_{n-1}\circ q_n]\otimes_{C(\CP_q^1)} [\pi_{1}])\oplus [q_1\circ q_2\circ\cdots \circ q_n]
\end{aligned}
$$
and 
$$
I_n:=[j_n]\oplus [s_n\circ j_{n-1}]\oplus [s_n\circ s_{n-1}\circ j_{n-2}]\oplus\cdots\cdots\oplus [s_n\circ s_{n-1}\circ\cdots\circ s_2\circ j_1]\oplus [s_n\circ s_{n-1}\circ\cdots\circ s_1].
$$
Then $\Pi_n\in KK_h(C(\CP_q^n), \cK^n\oplus\mathbb{C})$ is a $KK$-equivalence with inverse $I_n\in KK_h(\cK^n\oplus\mathbb{C},C(\CP_q^n))$.

\begin{rem}\label{Mortiaequiv}
Let $[ \ell^2(\nn_0)] \in KK(\mathcal{K},\cc)$ denote the class of the natural Morita equivalence. Let $\varphi: \mathbb{C} \to \mathcal{K}(\ell^2(\nn_0))$ be the $*$-homomorphism given by the choice of a rank-one projection, and denote by $[ \varphi ]$ the corresponding class in $KK(\cc, \mathcal{K})$. Note that different choices of rank-one projection yield the same class in $KK$-theory. The two classes are known inverse to each-other. This allows us to write an explicit $KK$-equivalence between ${C(\CP_q^{n})}$ and $\cc^{n+1}$ by
$$ [ \Pi_n ] \otimes_{\cK^n\oplus\mathbb{C}} \big(\bigoplus_{n}[\ell^2(\nn_0)]\oplus [1_\cc]\big)  \in KK(C(\CP_q^n), \mathbb{C}^{n+1}) ,
$$
with inverse $$\big(\bigoplus_{n}[\varphi] \oplus [1_\cc]\big) \otimes_{\cK^n\oplus\mathbb{C}}  [ I_n] \in KK(\mathbb{C}^{n+1},C(\CP_q^n)).$$  
\end{rem}

\section{Splittings and projections}\label{ProjectionsFredholmModules}

In this last section we relate the elements $[j_n], [s_n\circ s_{n-1}\circ\cdots\circ s_{n-k}\circ j_{n-k-1}]\in KK(\cK,C(\CP_q^n))$ from our Theorem~\ref{thm:KK}, to classes of projections in $C(\CP_q^n)$ which generate the group $K_0(C(\CP_q^n))$. To show that the projections are generators of the K-theory, we apply the index pairing with the Fredholm modules defined in \cite[Definition 1]{DAL2010}.

We shall first introduce some notation. Let $\A(\CP^n_q)$ denote the dense $*$-polynomial subalgebra of $C(\CP_q^n)$. For $0\leq i<k\leq n$ let $\epsilon_i^k$ be the array 
$$
\epsilon_i^k:=(\overbrace{0,0,...,0}^{\text{$i$ times}},\overbrace{1,1,...,1}^{\text{$k-i$ times}},\overbrace{0,0,...,0}^{\text{$n-k$ times}}), 
$$
and let $\underline{m}:=(m_1,...m_n)\in\nn^n.$

For $0< k\leq n$ we define a subspace $V_k^n\subseteq\ell^2(\nn^n)$ as the linear span of basis vectors $\ket{m_1,m_2,...,m_n}$ such that $0\leq m_1 \leq m_2 \leq ... \leq m_k$ and $m_{k+1}>m_{k+2}>...>m_n\geq 0$. Here, we let $m_0:=0$. 

For any $0\leq k\leq n$ there exists an irreducible representation $\pi_k^{(n)}: \A(\CP_q^n)\to \B(\ell^2(\nn^n)$ defined on the subspace $V_k^n$ as follows: 
\begin{align*}
\pi_k^{(n)}(z_i)\ket{\underline{m}}&=q^{m_i}\sqrt{1-q^{2(m_{i+1}-m_i+1)}}\ket{\underline{m}+\epsilon_i^k}, &&0\leq i<k, \\
\pi_k^{(n)}(z_k)\ket{\underline{m}}&=q^{m_k}\ket{\underline{m}}, && \\
\pi_k^{(n)}(z_i)&=0, &&i>k\geq 1.
\end{align*}
The representation $\pi_k^{(n)}$ is defined to be zero on the orthogonal complement of $V_k^n$.
For $k=0$ an irreducible representation $\pi_0^{(n)}$ is defined as follows:
\begin{align*}
\pi_0^{(n)}(z_0)\ket{\underline{m}}&=\begin{cases}
\ket{\underline{m}}, & m_1>m_2>...>m_n\geq 0 \\
0, & \text{otherwise}
\end{cases} \\
\pi_0^{(n)}(z_i)\ket{\underline{m}}&=0, i>0. 
\end{align*}
Let $\pi_+^{(n)}(a):=\sum_{\underset{\text{$k$ even}}{0\leq k\leq n}} \pi_k^{(n)}(a)$ and $\pi_-^{(n)}(a):=\sum_{\underset{\text{$k$ odd}}{0\leq k\leq n}} \pi_k^{(n)}(a)$. Then $\pi^{(n)}:=\pi_+^{(n)}\oplus \pi_-^{(n)}$ is a representation of $\A(\CP_q^n)$ on $\mathcal{H}_{(n)}:=\ell^2(\nn^n)\oplus\ell^2(\nn^n)$. 

For each $0\leq t\leq n$  
\[
\mu_t=(\A(\CP^n_q), \, \mathcal{H}_{(t)}, \, \pi^{(t)}, \, \gamma_{(t)}, \, F_{(t)}),
\]
becomes a 1-summable even Fredholm module with $F_{(t)}=\begin{pmatrix} 0 & 1 \\ 1 & 0 \end{pmatrix}$ and $\gamma_{(t)}$ the obvious grading operator. 

The Fredholm modules $\mu_t, t=0,...,n$ are shown to be generators of the $K$-homology group $K^0(\CP^n_q)$ in \cite[Proposition 5]{DAL2010}.

\begin{thm}\label{Projections}
Let $P_0=1$. Then there exists projections $P_l$, $l=1,,,,,n$ in $C(\CP_q^{n})$ such that 
\begin{align*}
\pi_k^{(n)}(P_l)&=0, && k<l \\
\pi_k^{(n)}(P_l)&=\text{The projection onto the subspace } \\ &\hspace{0.3cm}\text{spanned by\ } \{\ket{0,...,0,m_{l+1},...,m_n}\}\cap V_k^{n}, && k\geq l
\end{align*}
where the representation $\pi_k^{(n)}$ and the subspace $V_k^n$ are as above (cp. \cite{DAL2010}). The classes of the projections $P_l$, $l=0,1,...,n$ form a basis $K_0(C(\CP_q^n))$.
\end{thm}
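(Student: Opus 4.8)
The plan is to realise the projections as images of minimal projections under the splitting homomorphisms of Section~\ref{SplittingProjectivespace}, to compute their image under each representation $\pi_k^{(n)}$ directly from the formulas of \cite{DAL2010}, and then to deduce the basis property---most cleanly from the KK-equivalence of Theorem~\ref{thm:KK}, and explicitly via the index pairing with the modules $\mu_k$.

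First I would set, for $1\le l\le n$,
\[
P_l:=\big(s_n\circ s_{n-1}\circ\cdots\circ s_{l+1}\circ j_l\big)(e),
\]
where $e\in\cK$ is a minimal projection and the composition reduces to $j_n$ when $l=n$, together with $P_0:=1$. Taking $e$ to be the sink projection $P_{w_{n+1}}$ and feeding it through the splitting formulas, one observes that each $s_m$ sends the top sink projection to the sum of the two top vertex projections; an induction then identifies
\[
P_l=\sum_{i=l+1}^{n+1}P_{w_i},
\]
which is visibly a projection in $C^*(F_n)\cong C(\CP_q^n)$, with $P_0=\sum_{i=1}^{n+1}P_{w_i}=1$. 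By construction $[P_l]\in K_0(C(\CP_q^n))$ is the image under $(I_n)_*$ of a standard generator of $K_0(\cK^n\oplus\cc)\cong\zz^{n+1}$.

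The second step is to evaluate $\pi_k^{(n)}(P_l)$ using the vertex description just obtained. Since each $P_{w_i}$ is the projection onto the locus where the first $i-1$ multiplicities vanish, the partial sum $\sum_{i>l}P_{w_i}$ is the projection onto the vectors $\ket{0,\dots,0,m_{l+1},\dots,m_n}$ whose first $l$ entries are zero; intersecting with the invariant subspace $V_k^n$ on which $\pi_k^{(n)}$ is supported yields the asserted operator. When $k<l$ the subspace $V_k^n$ meets this locus only in the zero vector, whence $\pi_k^{(n)}(P_l)=0$. The point requiring care is the translation between the graph-algebra representation $\rho$ of \eqref{RepRho} and the Fredholm-module representations of \cite{DAL2010}, and in particular the precise description of $V_k^n$.

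For the basis claim the quickest route is to invoke Theorem~\ref{thm:KK}: since $I_n$ is a KK-equivalence, $(I_n)_*$ is an isomorphism on $K_0$ carrying the standard basis of $\zz^{n+1}$ to $\{[P_l]\}_{l=0}^n$, which is therefore a basis of $K_0(C(\CP_q^n))$. To exhibit the role of the Fredholm modules explicitly, I would instead form the matrix $M_{kl}:=\langle[\mu_k],[P_l]\rangle$ of index pairings: the vanishing $\pi_k^{(n)}(P_l)=0$ for $k<l$ makes the associated Fredholm operator trivial, so $M$ is lower triangular, and the diagonal entries reduce to the pairing of $\mu_l$ with the projection onto $\ket{0,\dots,0,m_{l+1},\dots,m_n}\cap V_l^n$, which I expect to equal $\pm1$. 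As $C(\CP_q^n)$ lies in the UCT class with vanishing $K_1$ and free $K_0$, the index pairing $K^0\times K_0\to\zz$ is perfect and the $[\mu_k]$ form a basis of $K^0$; a triangular pairing matrix with unit diagonal then lies in $GL_{n+1}(\zz)$, forcing $\{[P_l]\}$ to be a basis. The main obstacle is precisely the diagonal computation $M_{ll}=\pm1$, which demands unwinding the grading $\gamma_{(l)}$, representation $\pi^{(l)}$, and operator $F_{(l)}$ of $\mu_l$ from \cite{DAL2010}.
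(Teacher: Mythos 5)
Your construction of the projections is genuinely different from the paper's. The paper defines $P_l$ analytically, as the limit of the functional-calculus expressions
\[
q^{-2m}\prod_{r=1}^{m}\frac{q^{2}\,\pi_k^{(n)}(z_lz_l^*+z_{l+1}z_{l+1}^*+\cdots+z_nz_n^*)-q^{2(r+1)}}{1-q^{2r}},
\]
in the spirit of Brzezi\'nski--Fairfax: because $z_lz_l^*+\cdots+z_nz_n^*$ acts diagonally with eigenvalue $q^{2m_l}$ on $V_k^n$ when $k\geq l$, and by $0$ when $k<l$, the limit can be evaluated in each representation $\pi_k^{(n)}$ of D'Andrea--Landi directly, which is exactly what the two displayed formulas require. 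You instead define $P_l=(s_n\circ\cdots\circ s_{l+1}\circ j_l)(e)=\sum_{i=l+1}^{n+1}P_{w_i}$ in the graph picture; this is correct and is in fact the content of the Proposition the paper proves \emph{after} this theorem, where it is shown that $\pi(P_{n-k-1})=\rho(P_{w_{n-k}}+\cdots+P_{w_{n+1}})$. Your route to the basis statement via Theorem~\ref{thm:KK} (the $[P_l]$ are the images of the standard generators of $K_0(\cK^n\oplus\cc)$ under the KK-equivalence $I_n$) is also valid and is explicitly acknowledged in the paper's closing remark as an alternative to the index-pairing argument; it buys you independence from the Fredholm modules altogether, whereas the paper's pairing computation additionally shows that $\{[P_l]\}$ and $\{[\mu_k]\}$ are dual bases (the pairing matrix there is in fact the identity, not merely triangular: for $l<t$ the resulting Kasparov module is degenerate).

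The genuine gap is the step you yourself flag but do not close: the translation from the graph-algebra representation $\rho$ of \eqref{RepRho} to the representations $\pi_k^{(n)}$ of \cite{DAL2010}. The two displayed formulas in the statement are assertions about $\pi_k^{(n)}(P_l)$, and your description of $P_{w_i}$ as the projection onto the locus where the first $i-1$ multiplicities vanish is a statement about $\rho$ acting on $\ell^2(\nn^n\times\zz)$, not about $\pi_k^{(n)}$ acting on $V_k^n$. To pass from one to the other you need the chain of identifications between $\rho$, the sphere representation $\pi$, and the direct sum of the $\pi_k^{(n)}$, together with the precise definition of $V_k^n$ --- which is also what is needed to justify your claim that $V_k^n$ meets the locus $m_1=\cdots=m_l=0$ trivially when $k<l$ (in the paper this case follows instead from $\pi_k^{(n)}(z_lz_l^*+\cdots+z_nz_n^*)=0$). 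The paper's limit formula sidesteps this entirely because it expresses $P_l$ intrinsically in terms of the sphere generators, so it can be evaluated in any representation. Separately, in your secondary argument the diagonal computation $M_{ll}=\pm1$ is left as an acknowledged obstacle; your primary argument via Theorem~\ref{thm:KK} does not need it, so this is not fatal, but as written the proposal establishes the basis property while leaving the representation-theoretic formulas --- which are part of the theorem --- unproved.
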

\begin{proof}
We first want to show existence of the projections, they are obtained in a similar way to \cite{BF12} as limits
\[
q^{-2m} \prod_{r=1}^m \frac{q^2\pi_k^{(n)}(z_lz_l^*+z_{l+1}z_{l+1}^*+\cdots +z_nz_n^*)-q^{2(r+1)}}{1-q^{2r}} \xrightarrow{m\to \infty} \pi_k^{n}(P_l),
\]
where by $m\to\infty$ we refer to norm convergence.

It is clear that if $k<l$ then $\pi_k^{(n)}(z_lz_l^*+z_{l+1}z_{l+1}^*+\cdots +z_nz_n^*)=0$. For $k\geq l$ we have 
\[
\begin{aligned}
&\pi_k^{(n)}(z_lz_l^*+z_{l+1}z_{l+1}^*+\cdots +z_nz_n^*)\ket{m_1,...,m_n} \\ &=
(q^{m_l}(1-q^{2(m_{l+1}-m_l)})+\cdots + q^{2(m_k-1)}(1-q^{2(m_k-m_{k-1})})+q^{2m_k})\ket{m_1,...,m_n} \\ & = q^{2m_l} \ket{m_1,...,m_n}
\end{aligned}
\]
when $\ket{m_1,...,m_n}\in V_k^n$ otherwise $0$. Then 
\[
\begin{aligned}
& q^{-2m} \prod_{r=1}^m \frac{q^2\pi_k^{(n)}(z_lz_l^*+z_{l+1}z_{l+1}^*+\cdots +z_nz_n^*)-q^{2(r+1)}}{1-q^{2r}}\ket{m_1,...,m_n} \\ &= q^{-2m} \prod_{r=1}^m \frac{q^{2(m_l+1)}-q^{2(r+1)}}{1-q^{2r}}  \ket{m_1,...,m_n} \\
&\xrightarrow{m\to \infty}
\begin{cases}
\ket{m_1,...,m_n}, & m_l=0 \ \text{and} \ \ket{m_1,...,m_n}\in V_k^n \\
0, & \text{otherwise}
\end{cases}.
\end{aligned}
\]
Since $\ket{m_1,...,m_n}\in V_k^n$ we have $0\leq m_1 \leq m_2 \leq \cdots \leq m_k $ but since $k\geq l$ we have $m_i=0$ for $i\leq l$ and we get the projections $P_l$, $l=1,2,...,n$. 

This allows us to calculate the index pairing between the $K$-theory and the $K$-homology. Each projection gives a class in the $K$-theory given by $[P_l]=[(C(\CP_q^n),\psi_l,0)]\in KK(\mathbb{C}, C(\CP_q^n))$ where $\psi_l(1)=P_l$. We now wish to pair these with the classes $[\pi_t]\in KK(C(\CP_q^n), \mathbb{C})$, $t=0,1,...,n$ in \cite{DAL2010}. 
The product is given by 
\[
[P_l]\otimes_{C(\CP_q^n)}[\pi_t]=[(l^2(\mathbb{N}_0^t)_+\oplus l^2(\mathbb{N}_0^t)_-, \pi^{(t)}\circ \psi_l, F, \gamma) ].
\]
If $t<l$ then $\pi^{(t)}\circ \psi_l(1)=0$ since $\pi_i^{(t)}(P_l)=0, i\leq t$ hence the product is 0. In the case where $l<t$ we obtain that $[\pi^{(t)}\circ \psi_l(1), F]=0$ by the following result: 
\begin{align*}
\sum_{\underset{\text{k even}}{0\leq k\leq t}} \pi_k^{(t)}(P_l)-\sum_{\underset{\text{k odd}}{0\leq k\leq t}} \pi_k^{(t)}(P_l) = \sum_{\underset{\text{k even}}{l\leq k\leq t}} \pi_k^{(t)}(P_l)-\sum_{\underset{ \text{k odd}}{l\leq k\leq t}} \pi_k^{(t)}(P_l)=0
\end{align*}
The above follows since for every $\pi_k^{(t)}$ there are two other representation, namely $\pi_{k-1}^{(t)}$ and $\pi_{k+1}^{(t)}$, defined on orthogonal subspaces which are also non-zero on some part of $V_k^n$. We then obtain a degenerate module, hence the product is 0. 

For $l=t$ and $l$ even we have 
\[
\begin{aligned}
\ [P_l]\otimes_{C(\CP_q^n)}[\pi_l]&=[(l^2(\mathbb{N}_0^l)_+\oplus l^2(\mathbb{N}_0^l)_-, \pi^{(l)}\circ \psi_l, F, \gamma) ]
\\ 
&=[(\pi_l^{(l)}(P_l)l_2(\mathbb{N}_0^l)\oplus 0, M_{\mathbb{C}}, F, \gamma)] \\
&=[(\mathbb{C}, \id_{\mathbb{C}}, 0)] = [1_{\mathbb{C}}]
\end{aligned}
\]
We get a similar result if $l$ is odd. Hence it follows that $[P_l]\otimes_{C(\CP_q^n)}[\pi_t]=[1_{\mathbb{C}}]$ if $l=t$ otherwise it is $0$. Since the matrix with entries $a_{lt}=[P_l]\otimes_{C(\CP_q^n)}[\pi_t]$ is the identity matrix and is then invertible we get that $P_l, l=0,1,...n$ generate the $K$-theory. 
\end{proof}

\begin{rem}
We remark that the projections constructed here are very similar in fashion to the faithful irreducible representations of quantum teardrop presented in \cite[Section~2]{BF12}. Those were later used in \cite[Section~7.4]{AKL16} to prove an explicit $KK$-equivalence result.
\end{rem}
\begin{prop}
Let 
$
\pi: C(S_q^{2n+1})\to B(l^2(\nn^n\times \zz))
$ be the faithful representation of the Vaksman--Soibelman sphere defined in \eqref{eq:repVS}, and $\rho: C^*(L_{2n+1})\to B(l^2(\nn^n\times \zz))$ the faithful graph algebra representation in \eqref{RepRho}. 
The projections $P_{n-k-1}$ satisfy  
\begin{equation}
\label{eq:relProj}    
\begin{aligned}
\pi(P_{n-k-1})=\rho(P_{w_{n-k}}+P_{w_{n-k+1}}+P_{w_{n-k+2}}+ \cdots + P_{w_{n+1}}),
\end{aligned}
\end{equation}
for $k=0,1,...,n-1$. 
Moreover, up to Morita equivalence, they define the same classes in $KK$-theory as $[s_n\circ s_{n-1}\circ\cdots\circ s_{n-k}\circ j_{n-k-1}].$ More precisely, 
if $\varphi : \mathbb{C} \to \mathcal{K}$ is the $*$-homomorphism from Remark \ref{Mortiaequiv}, we have the following equalities of classes in $KK(\mathbb{C}, \CP^n_q)$:
\begin{equation}
\label{KK1}
\begin{aligned}[]
& [P_n]=[\varphi]\otimes_{\mathcal{K}} [j_n],\qquad  \mbox{and}\\   & [P_{n-k-1}]=[\varphi]\otimes_{\mathcal{K}} [s_n\circ s_{n-1}\circ\cdots\circ s_{n-k}\circ j_{n-k-1}], \qquad k=0, \dots, n-1.
\end{aligned}\
\end{equation}
\end{prop}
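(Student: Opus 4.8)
The proposition makes two claims: the operator identity \eqref{eq:relProj} in $B(\ell^2(\nn^n\times\zz))$, and the $KK$-theoretic identities \eqref{KK1}. The plan is to prove \eqref{eq:relProj} by computing both sides as explicit projections, and then to reduce \eqref{KK1} to \eqref{eq:relProj} by pushing the minimal projection $\varphi(1)$ through the composite of splitting homomorphisms.

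For \eqref{eq:relProj}, I first note that the limit formula in the proof of Theorem~\ref{Projections} realizes $P_l$ as the spectral projection $\chi_{\{1\}}(A_l)$ of the gauge-invariant element $A_l:=z_lz_l^*+z_{l+1}z_{l+1}^*+\cdots+z_nz_n^* \in C(\CP_q^n)$ onto its top eigenvalue $1$; since $\pi$ is a $*$-representation, $\pi(P_l)=\chi_{\{1\}}(\pi(A_l))$. Using the explicit formulas \eqref{eq:repVS} (and the fact that $A_l$, being gauge invariant, acts diagonally in the $\zz$-variable), each $\pi(z_jz_j^*)$ acts diagonally in the basis $\xi(k_1,\dots,k_n,m)$, and the contributions telescope so that $\pi(A_l)$ is multiplication by $q^{2(k_1+\cdots+k_l)}$. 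Hence $\pi(P_l)$ is the projection onto $\overline{\Span}\{\xi(k_1,\dots,k_n,m): k_1=\cdots=k_l=0\}$. On the other side, reading off \eqref{RepRho} shows that the $\rho(P_{w_j})$ are mutually orthogonal projections onto the shells determined by the position of the first nonzero entry among $k_1,\dots,k_n$, so that $\rho(P_{w_{n-k}}+\cdots+P_{w_{n+1}})$ is the projection onto $\{k_1=\cdots=k_{n-k-1}=0\}$. Taking $l=n-k-1$ yields \eqref{eq:relProj}; the analogous computation with $l=n$ gives $\pi(P_n)=\rho(P_{w_{n+1}})$, which feeds the first identity in \eqref{KK1}.

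For \eqref{KK1}, I use that the Kasparov product of the classes of two composable $*$-homomorphisms is the class of their composite, so $[\varphi]\otimes_{\cK}[f]=[f\circ\varphi]$; under the isomorphism $KK(\cc,C(\CP_q^n))\cong K_0(C(\CP_q^n))$ this is the class $[f(\varphi(1))]$ of the projection $f(\varphi(1))$, where $f$ is $j_n$, respectively $s_n\circ\cdots\circ s_{n-k}\circ j_{n-k-1}$. Since $\cK$ is identified (Section~\ref{sec:QPS}) with the gauge-invariant ideal $I_{\{\text{sink}\}}$ whose matrix units are $s_\alpha s_\beta^*$ with $r(\alpha)=r(\beta)$ the sink, the sink vertex projection corresponds to the trivial path and is therefore a minimal projection; as all rank-one projections determine the same $KK$-class, I may choose $\varphi$ with $j_{n-k-1}(\varphi(1))=P_{w_{n-k}}$, the sink of $F_{n-k-1}$ (respectively $j_n(\varphi(1))=P_{w_{n+1}}$). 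It then remains to evaluate the composite of splittings on this projection: the formulas for $s_m$ fix the non-sink vertex projections and send the sink projection $P_{w_m}$ to $P_{w_m}+P_{w_{m+1}}$, so an induction on the number of applied splittings gives $(s_n\circ\cdots\circ s_{n-k})(P_{w_{n-k}})=P_{w_{n-k}}+P_{w_{n-k+1}}+\cdots+P_{w_{n+1}}$. By \eqref{eq:relProj} this projection is identified with $P_{n-k-1}$ inside $C(\CP_q^n)$ via $\rho^{-1}\circ\pi$, whence $[P_{n-k-1}]=[\varphi]\otimes_{\cK}[s_n\circ\cdots\circ s_{n-k}\circ j_{n-k-1}]$; the case $[P_n]=[\varphi]\otimes_{\cK}[j_n]$ is the analogous shorter computation.

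The main difficulty is careful bookkeeping rather than any conceptual hurdle. On the analytic side, the telescoping identity for $\pi(A_l)$ and the matching of the two ranges require keeping the index conventions of \eqref{eq:repVS} and \eqref{RepRho} straight (in particular the shift between the generators $z_0,\dots,z_n$ and the quantum numbers $k_1,\dots,k_n$), and one must invoke the faithfulness of $\rho$ together with the fact that $\rho^{-1}\circ\pi$ is the isomorphism $C(S_q^{2n+1})\cong C^*(L_{2n+1})$ in order to upgrade the operator identity \eqref{eq:relProj} to an equality of elements of $C(\CP_q^n)$, and hence of their $K_0$-classes. On the algebraic side, the induction through the chain of splittings must track which vertex is the sink at each stage. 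The extreme cases, namely $k=0$ and the unit $P_0=1$ arising from the fully iterated unital composite, are treated separately but are immediate.
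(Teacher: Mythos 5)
Your proposal is correct and follows essentially the same route as the paper's proof: compute $\pi(P_l)$ and $\rho(P_{w_{n-k}}+\cdots+P_{w_{n+1}})$ explicitly as the same diagonal projection, iterate the splitting formula on the sink vertex projection, and choose $\varphi(1)$ to be that sink projection. The only (immaterial) divergence is how minimality of $P_{w_{n-k}}$ in $\cK$ is justified — you read it off from the matrix-unit description of the ideal, while the paper checks it is rank one in the faithful irreducible representation $\psi\circ\pi^{-1}\circ\rho$.
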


\begin{proof} We start our proof by recalling our labeling convention on the graph $F_n$ underlying the algebra $C(\CP^n_q)$: vertices will be denoted by $w$, and edges by $f$. We will do so independently of the dimension $n$ of the space.

Let $0\leq k<n$, we denote by $I_{\{w_{n-k}\}}$ the 2-sided closed ideal generate by the projection $P_{w_{n-k}}$. Then 
$$
j_{n-k-1}:I_{\{w_{n-k}\}}\to C^*(F_{n-k-1})
$$
and 
$$
\begin{aligned}
s_n&\circ s_{n-1}\circ\cdots\circ s_{n-k}\circ j_{n-k-1}(P_{w_{n-k}})\\ &=
s_n\circ s_{n-1}\circ\cdots\circ s_{n-k-1}(P_{w_{n-k}}+P_{w_{n-k+1}}) \\
&= s_n\circ s_{n-1}\circ\cdots\circ s_{n-k-2}(P_{w_{n-k}}+P_{w_{n-k+1}}+P_{w_{n-k+2}}) \\
&= P_{w_{n-k}}+P_{w_{n-k+1}}+P_{w_{n-k+2}}+ \cdots + P_{w_{n+1}}\in C(\CP_q^n),
\end{aligned}
$$
 Any element in $I_{\{w_{n-k}\}}$ takes the form $S_{\alpha}S_{\beta}^*$ where $\alpha$ and $\beta$ are finite paths in $F_{n-k-1}$ such that $r(\alpha)=r(\beta)=w_{n-k}$ by \cite[Lemma 3.2]{BHRS02}. 

By the structure of the graph and the splitting in \eqref{splitting},  we have 
$$
\begin{aligned}
s_{n-k}(S_{\alpha}S_{\beta}^*)
&=s_{n-k}(S_{\alpha'}P_{w_{n-k}}S_{\beta'}^*)= S_{\alpha'}(w_{v_{n-k}}+P_{w_{n-k+1}})S_{\beta'}^*
\end{aligned}
$$
where $\alpha=\alpha^{\prime} f_{i_1,n-k}^{m_1}, \beta=\beta^{\prime} f_{i_2,n-k}^{m_2}$. Then the image of $I_{\{w_{n-k}\}}$ under $s_{n-k}$ consists of all $S_{\mu}S_{\nu}^{*}$ such that $r(\mu)=r(\nu)\in \lbrace w_{n-k},w_{n-k+1}\rbrace$ which is precisely the ideal generated by the sum $P_{w_{n-k}}+P_{w_{n-k+1}}$ in $C(\CP_q^{n-k})$ by \cite[Lemma 3.2]{BHRS02}. Continuing like this we obtain that the image of $I_{\{w_{n-k}\}}$ under $s_n\circ s_{n-1}\circ\cdots\circ s_{n-k}\circ j_{n-k-1}$ is the ideal generated by $P_{w_{n-k}}+P_{w_{n-k+1}}+P_{w_{n-k+2}}+ \cdots + P_{w_{n+1}}$ in $C(\CP_q^n)$. 

Consider the projections $P_l,l=0,...,n$ in the representation $\pi$ of $C(S_q^{2n+1})$, restricted to $C(\CP^{n}_q)$. Then 
\begin{equation}\label{reppi}
\begin{aligned}
q^{-2n}& \prod_{r=1}^n \frac{q^2\pi(z_lz_l^*+z_{k+1}z_{k+1}^*+\cdots +z_nz_n^*)-q^{2(r+1)}}{1-q^{2r}}\xi(k_1,...,k_n,m) \\&=\frac{q^2q^{2(k_1+\cdots +k_l)}-q^{2(r+1)}}{1-q^{2r}}\xi(k_1,...,k_n,m) \\
&=\begin{cases}
\xi(k_1,...,k_{n},m) & \text{if} \ k_1+\cdots + k_l=0 \\
0 & \text{otherwise}
\end{cases}
\end{aligned}
\end{equation}

Then $\pi(P_l)$ is the projection onto the subspace spanned by 
$$\{\xi(0,\cdots,0,k_{l+1},\cdots,k_n,m)|\ k_i\in \mathbb{N}, m\in \zz \}.$$ 
Moreover, under the representation $\rho$ of the graph $C^*$-algebra $C^*(L_{2n+1})$, which descends to $C^*(F_{n})$, we have
$$
\begin{aligned}
\rho&(1-(P_{w_1}+P_{w_2}+ \cdots + P_{w_l}))\xi(k_1,...,k_n,m) \\ &=
(1-((1-\delta_{k_1,0})+\delta_{k_1,0}(1-\delta_{k_2,0})+\delta_{k_1,0}\delta_{k_2,0}(1-\delta_{k_3,0}) \\
&\hspace{0.2cm}+\cdots + \delta_{k_1,0}\delta_{k_2,0}\cdots\delta_{k_{k-1},0}(1-\delta_{k_l,0})))\xi(k_1,...,k_n,m) \\
&= \delta_{k_1,0}\delta_{k_2,0}\delta_{k_l,0}\xi(k_1,...,k_n,m).
\end{aligned}
$$
Hence $\rho(1-(P_{w_1}+P_{w_2}+ \cdots + P_{w_l}))=\pi(P_l)$ and
$$
\rho(P_{w_{n-k}}+P_{w_{n-k+1}}+ \cdots + P_{w_{n+1}}) =\rho(1-(P_{w_1}+\cdots + P_{w_{n-k-1}}))
= \pi(P_{n-k-1}).
$$

Let $\psi$ be the irreducible representation in \eqref{eq:repVS}. In order to prove \eqref{KK1} for $P_n$ we note that $\psi(P_n)$ is the projection onto the subspace spanned by the vector $\zeta(0,...,0)$, which follows by a calculation similar to the one in \eqref{reppi}. Since 
$$
\psi\circ \pi^{-1}\circ\rho(P_{w_{n+1}})=\psi(P_n)
$$
it follows that $P_{w_{n+1}}\in C^*(F_n)$ is a rank-one projection by considering the representation $\psi\circ \pi^{-1}\circ\rho$. We can set $\varphi(1)=P_{w_{n+1}}$ since, as mentioned earlier, the choice of rank-one projection does not affect the class in $KK$-theory. Moreover, 
by \eqref{eq:relProj}, we have 
$$\rho(j_n\circ \varphi(1))=\rho(P_{w_{n+1}})=\pi(P_n).$$
Since $[\varphi]\otimes_\cK [j_n]=[(C(\CP_q^n),j_n\circ\varphi,0)]\in KK(\cc,C(\CP_q^n))$ we obtain \eqref{KK1}. 

Similarly we have that $P_{w_{n-k}}$ is a rank one projection in $C^*(F_{n-k-1})$. Hence for each $k=0,1,...,n-1$ we can set $\varphi(1)=P_{w_{n-k}}$. Then 
$$
\begin{aligned}
\rho&(s_n\circ s_{n-1}\circ\cdots\circ s_{n-k}\circ j_{n-k-1}\circ\varphi(1)) \\&=\rho(P_{w_{n-k}}+P_{w_{n-k+1}}+P_{w_{n-k+2}}+ \cdots + P_{w_{n+1}})= \pi(P_{n-k-1})
\end{aligned}
$$
by \eqref{eq:relProj}.
Hence 
\[
[\varphi]\otimes_{\mathcal{K}} [s_n\circ s_{n-1}\circ\cdots\circ s_{n-k}\circ j_{n-k-1}]=[s_n\circ s_{n-1}\circ\cdots\circ s_{n-k}\circ j_{n-k-1}\circ \varphi]=[P_{n-k-1}],
\]
as desired.
\end{proof}

Note that it follows directly from the identification in \eqref{eq:relProj}, that $P_l, l=0,1,...,n$ generates $K_0(C(\CP_q^{n}))$ which was shown in Theorem \ref{Projections} using the index pairing with Fredholm modules.

\end{document}